\documentclass[letterpaper, 10 pt, conference]{ieeeconf}  % Comment this line out if you need a4paper

\IEEEoverridecommandlockouts                              % This command is only needed if 

\usepackage{cite}
\newlength{\bibitemsep}
\newlength{\bibparskip}
\let\oldthebibliography\thebibliography
\renewcommand\thebibliography[1]{%
  \oldthebibliography{#1}%
  \setlength{\parskip}{\bibitemsep}%
  \setlength{\itemsep}{\bibparskip}%
}
\renewcommand{\epsilon}{\varepsilon}
\usepackage[utf8]{inputenc} % allow utf-8 input
\usepackage[T1]{fontenc}    % use 8-bit T1 fonts
\usepackage{hyperref}       % hyperlinks
\usepackage{url}            % simple URL typesetting
\usepackage{booktabs}       % professional-quality tables
\usepackage{amsfonts}       % blackboard math symbols
\usepackage{nicefrac}       % compact symbols for 1/2, etc.
\usepackage{microtype}      % microtypography
\usepackage{xcolor}         % colors
\usepackage{dsfont,amssymb,amsmath,subfig, graphicx,fancyhdr,mdframed}
\usepackage{dsfont,mathtools, mathrsfs,wrapfig} 
\let\labelindent\relax
\usepackage{enumitem}

\usepackage{algorithm, algorithmic}
\usepackage{cleveref}
\newtheorem{theorem}{Theorem}[section]
\newtheorem{definition}[theorem]{Definition}
\newtheorem{lemma}[theorem]{Lemma}

\newcommand{\R}{\mathbb{R}}

\DeclareMathOperator{\dom}{dom}
\newcommand{\la}{\lambda}
\newcommand{\x}{\bar x}
\newcommand{\n}[1]{\|#1 \|} 
\newcommand{\lr}[1]{ \langle #1 \rangle}
\newcommand{\set}[1]{\mathcal{#1}}
\newcommand{\PP}{\mathds{P}}
\usepackage{comment}
\newenvironment{customproof}[1][Proof]%
   {\par\noindent\textbf{#1. }\ignorespaces}
   {\hfill$\square$\par}

\allowdisplaybreaks

\title{\LARGE \bf A Hybrid Algorithm for Monotone Variational Inequalities}

\author{Reza Rahimi Baghbadorani\textsuperscript{a}, Peyman Mohajerin Esfahani\textsuperscript{a,b}, Sergio Grammatico\textsuperscript{a}  %<-this % stops a space
\thanks{The authors are with (a) Delft University of Technology and (b) University of Toronto. E-mail addresses:
{\tt\small \{r.rahimibaghbadorani, p.mohajerinesfahani, s.grammatico\}@tudelft.nl}. This work was supported by the ERC grant TRUST-949796 and the NSERC Discovery grant RGPIN-2025-06544.}%
}

\begin{document}

\maketitle
\thispagestyle{empty}
\pagestyle{empty}
%%%%%%%%%%%%%%%%%%%%%%%%%%%%%%%%%%%%%%%%%%%%%%%%%%%%%%%%%%%%%%%%%%%%%%%%%%%%%%%%
\begin{abstract}
Inspired by the adaptive Golden Ratio Algorithm (aGRAAL), we propose two new methods for solving monotone variational inequalities. We show that by selecting the momentum parameter beyond the golden ratio in aGRAAL, the convergence speed can be improved, which motivates us to study the switching between small and large momentum parameters to accelerate convergence. We validate the performance of our proposed algorithms on several classes of variational inequality problems studied in the machine learning and control literature, including Nash equilibrium seeking, composite minimization, Markov decision processes, and zero-sum games, and compare them to that of existing methods.
\end{abstract}
%%%%%%%%%%%%%%%%%%%%%%%%%%%%%%%%%%%%%%%%%%%%%%
\section{Introduction}\label{intro}
%{The variational inequality (VI) problem has recently emerged from several multi-agent control and machine learning problems \cite{benenati2024linear}, e.g., in generative adversarial networks, robust optimization, and optimal control \cite{de2011optimal}.} 
{The variational inequality (VI) problem has recently emerged from several multi-agent control and machine learning problems \cite{benenati2024linear}, e.g., in generative adversarial networks, robust optimization, and optimal control \cite{de2011optimal}.} \\ %This broad applicability of VI across diverse domains calls for efficient algorithms capable of fast solution.\\
{\textbf{Motivating example (Linear-Quadratic Dynamic Game~\cite{benenati2024linear}).}} Consider a finite-horizon open-loop linear--quadratic (LQ) dynamic game with $N$ agents and shared system state $x_t \in \mathbb{R}^n$ evolving as
\begin{align*}
x_{t+1} = A_t x_t + \sum_{i=1}^N B_t^{(i)} u_t^i, \quad t \in \{0,\dots,T-1\},
\end{align*}
where $u_t^i \in \mathbb{R}^{m_i}$ is the control input of agent $i$ at time $t$. Each agent $i$ seeks to minimize the quadratic cost
\begin{align*}
J_i(u^1,\dots,u^N) &= \ \frac{1}{2} \sum_{t=0}^{T-1} \Big( x_t^\top Q_t^{(i)} x_t + u_t^{i\top} R_t^{(i)} u_t^i 
 \\
 &+ \sum_{j\neq i} u_t^{j\top} S_t^{(i,j)} u_t^i \Big) + \frac{1}{2} x_T^\top Q_T^{(i)} x_T,
\end{align*}
with $Q_t^{(i)}, Q_T^{(i)} \succeq 0$ weighting the state deviation, $R_t^{(i)} \succ 0$ penalizing the individual control effort, and $S_t^{(i,j)}$ capturing the \emph{interaction or coupling} between the control actions of agents $i$ and $j$. We impose \emph{local constraints} on each agent's control input $u^i \in \mathcal{V}^i$ and \emph{global constraints} on the state trajectory $x_t \in \mathcal{X}$. By recursively substituting the dynamics, the state trajectory can be expressed as an affine function of the stacked control vector $u = \mathrm{col}(u^1,\dots,u^N)$. The resulting Nash equilibrium conditions can then be formulated as an affine variational inequality:
\begin{align*}
\text{find } u^* \in \mathcal{V} &:= \mathcal{V}^1 \times \dots \times \mathcal{V}^N \text{ such that } \\
\langle F(u^*), &v - u^* \rangle \ge 0, \quad \forall v \in \mathcal{V},
\end{align*}
where $F$ is the pseudogradient operator of cost $J_i$ defined as
\begin{align*}
F(u) = \begin{bmatrix} \nabla_{u^1} J_1(u) \\ \vdots \\ \nabla_{u^N} J_N(u) \end{bmatrix}.
\end{align*}
This framework illustrates how shared dynamics and interactions among agents naturally lead to a VI problem, providing a tractable benchmark for developing fast and scalable algorithms for constrained multi-agent dynamic games in various applications, such as distributed automatic generation control, vehicle platooning, and the control of autonomous vehicles navigating a crossroad \cite{benenati2024linear,baghbadorani2025douglas}.\\
This motivates fast algorithms for the following VI problem:
%In this paper, we consider the following VI problem:
\begin{equation}\label{VI-main}
\text{find} \,\, x^* \in \mathcal{V} \, \text{s.t.} \, \inf\limits_{x\in \mathcal{V}}\langle F(x^*), x - x^* \rangle + g(x) - g(x^*) \geq 0,
\end{equation}
where $\mathcal{V}$ is a finite-dimensional vector space. We assume that the operator $F$ is continuous, monotone, Lipschitz continuous, the solution set of \eqref{VI-main} is nonempty, and $g(x)$ is a proper lower semicontinuous (lsc) convex function. Problem \eqref{VI-main} can be written more traditionally as follows:
\begin{equation}\label{VI-2}
\text{find} \,\, x^* \in \mathcal{A} \quad  \text{s.t.} \quad \inf\limits_{x\in \mathcal{A}} \langle F(x^*), x - x^* \rangle \geq 0,
\end{equation}
where $g$ in \eqref{VI-main} would be the indicator function of the set $\mathcal{A}$ in \eqref{VI-2}. Note that VI problem in \eqref{VI-main} can be considered as a general form of problems in convex optimization. As an example, consider the composite minimization problem $\min_{x \in \mathbb{R}^n} f(x)+g(x)$,
%\begin{align*}
%    \min_{x \in \mathbb{R}^n} f(x)+g(x)
%\end{align*}
where $f$ is a convex and smooth function and $g$ is a proper lsc convex (and possibly nonsmooth) function. Via the KKT conditions, this problem can be written as \eqref{VI-main} with $F = \nabla f$ and the same $g$ in \eqref{VI-main} \cite{facchinei1998regularity}. Another common problem in optimization and control theory is the min-max problem. For example, consider the convex-concave saddle point problem $\min_{y \in \mathbb{R}^n} \max_{z \in \mathbb{R}^m} g_1(y) + f(y,z) - g_2(z)$,
%\begin{align*}
%    \min_{y \in \mathbb{R}^n} \max_{z \in \mathbb{R}^m} g_1(y) + f(y,z) - g_2(z)
%\end{align*}
where $g_1$ and $g_2$ are proper lsc convex functions and $f(y,z)$ is a smooth convex-concave function in $y$ and $z$, respectively. By using first-order optimality conditions, we can rewrite this problem as in \eqref{VI-main} with the following variables:
\begin{align*}
    x = \begin{pmatrix} y\\ z \end{pmatrix}, \quad F = \begin{pmatrix} \nabla_y f(y,z)\\ -\nabla_z f(y,z) \end{pmatrix}, \quad g(x) = g_1(y)+g_2(z).
\end{align*}
Furthermore, in many applications of reinforcement learning and game theory, we need to solve a fixed-point problem. For instance, Markov decision processes (MDPs) are a powerful modeling framework in reinforcement learning, where we should solve a fixed point problem, $Tx = x$, for some finite dimensional operator $T$, that is~\eqref{VI-main} with $F = \mbox{Id} - T$ and $g(x) = 0$~\cite{browder1967new}.\\
Several iterative algorithms have been introduced to address VI problems \eqref{VI-main}. For comparison purposes, let us review some recent and closely related existing methods used in system and control applications. For simplicity, let us consider the VI problem formulation in~\eqref{VI-2}.\\
\textbf{Projected Gradient descent (PGD) \cite{nemirovskij1983problem}:} 
\begin{align*}
    x^{k+1} = {\pi}_{\mathcal{A}}(x^{k} - \lambda F(x^k)),
\end{align*}
where $\lambda$ is the stepsize. The convergence of this method is guaranteed for strongly monotone (with a strongly monotone constant $\mu$) and Lipschitz (with a Lipschitz constant $L$) operator with $\lambda \in \left(0, 2\mu/L^2\right)$. {This algorithm is used for equilibrium seeking in aggregative games \cite{belgioioso2021semi} (IEEE TAC, 2021), optimal consensus and resource allocation \cite{huang2023unified} (IEEE TAC, 2023), open-loop Nash equilibrium in linear quadratic dynamic games \cite{benenati2024linear}, and game theoretical approach for generative adversarial network \cite{franci2020game} (IEEE CDC, 2020).}\\
\textbf{Extragradient descent \cite{korpelevich1977extragradient}:} 
\begin{align*}
    & y^{k} = {\pi}_{\mathcal{A}}(x^{k} - \lambda F(x^k)),\\
    & x^{k+1} = {\pi}_{\mathcal{A}}(x^{k} - \lambda F(y^k)),
\end{align*}
where $\lambda$ is the stepsize, and unlike the previous method, the convergence is guaranteed for a Lipschitz 
 and monotone operator (with a Lipschitz constant $L$) with $\lambda \in \left(0, 1/L\right)$. {The authors in \cite{yousefian2014optimal} (IEEE CDC, 2014) implement this method to design robust stochastic extragradient algorithms for solving monotone VIs. Similarly, \cite{huang2023unified} (IEEE TAC, 2023) uses the extra-gradient method to solve the variational inequality problem in optimal consensus and resource allocation problems. The same algorithm is adopted in \cite{xie2022collaborative} (IEEE TPS, 2022) to design an algorithm for the variational inequality problem associated with a collaborative pricing scheme for a power-transportation coupled network.}\\
 %The extragradient method has been extensively studied and improved in various ways. For brevity, we refer interested reader to~\cite{tseng1995linear, korpelevich1977extragradient} for further details.\\
\textbf{Projected Reflected Gradient descent (PrjRef) \cite{malitsky2015projected}:} 
\begin{align*}
    x^{k+1} = {\pi}_{\mathcal{A}}(x^{k} - \lambda F(2x^k - x^{k-1})),
\end{align*}
where $\lambda$ is the stepsize, and the convergence of this method is guaranteed for Lipschitz and monotone operator (with a Lipschitz constant $L$) with $\lambda \in \left(0, (\sqrt{2}-1)/L\right)$. Unlike the extragradient method, PrjRef needs only one projection per iteration. {The authors in \cite{cui2016analysis} (IEEE CDC, 2016) implement this method to design a stochastic monotone VI algorithm under some weak sharpness assumptions. Likewise, this method is adopted in \cite{guo2021variational} (IEEE CDC, 2021) to design an algorithm for solving Bayesian regression game, which is a special class of two-player general-sum Bayesian game. The authors in \cite{franci2021distributed} (ECC, 2021) also use the projected-reflected method to design an algorithm for stochastic generalized Nash equilibrium problems.}\\
\textbf{Golden RAtio ALgorithm (GRAAL) \cite{malitsky2020golden}:} 
\begin{align*}
    & y^{k} = (1-\beta)x^{k} + \beta y^{k-1},\\
    & x^{k+1} = {\pi}_{\mathcal{A}}(y^{k} - \lambda F(x^k)),
\end{align*}
where $\lambda$ is the stepsize and $\beta \in \left(0, {(\sqrt{5}-1)/2}\right]$. The convergence of this method is guaranteed for Lipschitz and monotone operator (with a Lipschitz constant $L$) with $\lambda \in \left(0, 1/{(2\beta L)}\right)$, and it requires one projection per iteration. The stepsize in this method can be chosen adaptively as follows, leading to the Adaptive Golden RAtio ALgorithm (aGRAAL) \cite{malitsky2020golden}:
\begin{align*}
    \lambda_k &= \\
    &\min \left \{(\beta + \beta^2)\lambda_{k-1},  \frac{\|x^k - x^{k-1}\|^2}{4\beta^2\lambda_{k-2}\|F(x^k) - F(x^{k-1})\|^2}, \bar{\lambda}\right\}.
\end{align*}
{This algorithm is applied in \cite{franci2021stochastic} (IEEE TAC, 2021) and \cite{krilavsevic2021extremum} (IEEE CDC, 2021) to design a method for (stochastic) generalized Nash equilibrium in monotone games. The authors in \cite{pantazis2024nash} also use this method in a stochastic portfolio allocation game as a case study for Nash equilibrium seeking in quadratic-bilinear Wasserstein distributionally robust games.}\\
We also refer interested readers to \cite{mignoni2025monviso} for additional algorithms for applications of monotone VI, as well as a ready-to-use Python toolbox.

\textbf{Contribution.} In this paper, we propose two algorithms for solving the monotone variational inequality problem in \eqref{VI-main} that do not require knowledge of a global Lipschitz constant. {Both algorithms are inspired by stability in switched and hybrid systems, where a switched system is asymptotically stable if each subsystem has a strictly decreasing Lyapunov function and switching does not increase it \cite{liberzon2003switching}. Then, based on this context and the application of hybrid system stability in optimization and extremum seeking \cite{goebel2009hybrid,poveda2016hybrid,poveda2018hybrid}, we switch between two algorithms adopted for solving VIs.} Our technical contribution is to show convergence for the first algorithm and the ergodic \(\mathcal{O}(k^{-1})\) convergence rate for the second one. The proposed algorithms reduce dependency on the negative momentum term, previously used in \cite{malitsky2020golden} to ensure boundedness and convergence of iterates, by increasing the momentum parameter in some iterations with the potential to switch the momentum parameter between small (used in aGRAAL) and large values. Using a large momentum parameter in our proposed algorithms (Algorithms \ref{alg1} and \ref{alg2}) brings the iterations closer to the most recent one, allowing us to estimate the local Lipschitz constant of \( F \) more accurately and reducing the frequent use of the negative momentum term which repetitively affects convergence speed \cite{alacaoglu2023beyond}. Briefly speaking, if \( F \) is a Lipschitz and monotone operator, our proposed methods switch between PGD (method without momentum) and aGRAAL (method with the negative momentum) based on certain conditions, along with an adaptive stepsize. Finally, we provide several numerical experiments in which the proposed algorithms consistently outperform the existing state-of-the-art. We note that our method rarely requires additional computations for operator and projection evaluations compared to aGRAAL. However, in the worst case, these computations may need to be performed twice.\\
%The paper is organized as follows. In Section \ref{first adaptive VI}, we introduce the first algorithm for solving \eqref{VI-main} and the theoretical results are explained. Section \ref{second adaptive VI} introduces the second adaptive method for solving \eqref{VI-main}. Finally, several illustrative examples to show the efficiency of our approach is presented in Section \ref{simulation}.
%%%%%%%%%%%%%%%%%%%%%%%%%%%%%%%%%%%%%%%%%%%
\begin{algorithm}[!b]
\caption{Adaptive algorithm for VI (Method 1)}
\label{alg1}
\begin{algorithmic}[1]
\REQUIRE Choose $x^0$, $x^1$, $\bar{\lambda} \gg 0$, $\lambda_0 > 0$, $\phi \in (1,\frac{1+\sqrt{5}}{2}]$, $\theta_0 = 1$, $\rho = \dfrac{1}{\phi} + \dfrac{1}{\phi^2}$, $\text{flg} = 0$, $\bar{k} = 1$
\STATE \textbf{For} {$k = 1,2,\ldots$} \textbf{do}
\STATE Find the stepsize:
\[\lambda_k = \min\left\{\rho \lambda_{k-1}, \dfrac{\phi\theta_{k-1}}{4\lambda_{k-1}} \dfrac{\|x^k - x^{k-1}\|^2}{\|F(x^k) - F(x^{k-1})\|^2}, \bar{\lambda}\right\}\] 
%\[\lambda_k = \min\left\{\rho \lambda_{k-1}, \dfrac{\phi\theta_{k-1}}{4\lambda_{k-1}} \dfrac{\|x^k - x^{k-1}\|^2}{\|F(x^k) - F(x^{k-1})\|^2}, \Bar{\lambda}\right\} \tag{$L_1$} \]
\IF{ ($J(x^k) - J(x^{k-1}) > 0 \,\,\, \land \,\,\, \text{flg} = 1$) \,\, $\lor$ \,\, $\min\{ J_i \}_{i = 0}^{k-1} < J_k + 1/\bar{k}$ }
    \STATE \hspace{25mm} $\bar{x}^{k} = \dfrac{(\phi - 1)x^k + \bar{x}^{k-1}}{\phi}, \, \text{flg} = 0$ 
\ELSE
    \STATE \hspace{25mm} $\bar{x}^{k} = x^k$, $\text{flg} = 1$, $\bar{k} = \bar{k} + 1$
    %\STATE $\text{flg} = 1$
    %\STATE $\bar{k} = \bar{k} + 1$
\ENDIF
\STATE Update the next iteration: \\
\hspace{25mm} $x^{k+1} = \text{prox}_{\lambda_k g}(\bar{x}^k - \lambda_k F(x^k))$
\STATE Update: \qquad\qquad $\theta_{k} = \dfrac{\phi\lambda_k}{ \lambda_{k-1}}$
\STATE Residual computation: \,  $J_{k+1} = x^k - \text{prox}_{g}(x^k - F(x^k))$
\end{algorithmic}
\end{algorithm}
%%%%%%%%%%%%%%%%%%%%%%%%%%%%%%%%%%%%%%%%%%%
%%%%%%%%%%%%%%%%%%%%%%%%%%%%%%%%%%%%%%%%%%%%%
\begin{algorithm}[!t]
\caption{Adaptive algorithm for VI (Method 2)}
\label{alg2}
\begin{algorithmic}[1]
\REQUIRE Choose $x^0$, $x^1$, $\bar{\lambda} \gg 0$, $\lambda_0 > 0$, $\alpha \in (1,\frac{1+\sqrt{5}}{2}]$, $\theta_0 = 1$, $\rho = \dfrac{1}{\alpha} + \dfrac{1}{\alpha^2}$, $\bar \phi \gg \frac{1+\sqrt{5}}{2}$, $\text{sum}_0 ^1 = 0$, $\text{sum}_0 ^2 = 0$, flg = 1, $\phi_0 = \bar{\phi}$.
\STATE \textbf{For} {$k = 1,2,\ldots$} \textbf{do}
\STATE Find the stepsize:
\qquad \qquad \qquad \[\lambda_k = \min\left\{\rho \lambda_{k-1}, \dfrac{\alpha\theta_{k-1}}{4\lambda_{k-1}} \dfrac{\|x^k - x^{k-1}\|^2}{\|F(x^k) - F(x^{k-1})\|^2}, \bar{\lambda}\right\}\]
%\qquad \qquad \qquad \[\lambda_k = \min\left\{\rho \lambda_{k-1}, \dfrac{\alpha\theta_{k-1}}{4\lambda_{k-1}} \dfrac{\|x^k - x^{k-1}\|^2}{\|F(x^k) - F(x^{k-1})\|^2}, \Bar{\lambda}\right\} \tag{$L_2$} \]
\STATE $\bar{x}^{k} = \dfrac{(\phi_k - 1)x^k + \bar{x}^{k-1}}{\phi_k}$
\STATE Update the next iteration:\\
\qquad \qquad \qquad $x^{k+1} = \text{prox}_{\lambda_k g}(\bar{x}^k - \lambda_k F(x^k))$
\STATE Update: \qquad $\theta_{k} = \dfrac{\alpha\lambda_k}{ \lambda_{k-1}}$
\STATE compute the following summations with $\phi_{k+1} = \bar \phi$:\\
\qquad \qquad \qquad $\text{sum}_{k+1}^1 = \text{sum}_k^1$ + \eqref{ls:eq:12} \\
\qquad \qquad \qquad $\text{sum}_{k+1}^2 = \text{sum}_k^2$ + \eqref{ls:eq:13}
\IF{ ($\text{sum}_{k+1}^1 \leq 0 \,\,\, \land \,\,\, \text{flg} = 1$) \,\,$\lor$\,\, ($\text{sum}_{k+1}^2 \leq 0 \,\,\, \land \,\,\, \text{flg} = 0$)}
    \STATE $\phi_{k+1} = \bar\phi$, $\text{flg} = 1$ %\hspace*{\fill} {$\triangleright\,$ \textbf{case \ref{negative summation} or modifying $\phi_{k+1}$ to large value \ref{switching}}}
\ELSE
\IF{$\text{flg} = 1$} 
    \STATE $x^{k+1} = x^k$, $x^{k} = x^{k-1}$, $\bar x^{k} = \bar x^{k-1}$  %\hfill {$\triangleright \,$ \textbf{modifying $\phi_{k+1}$ to small value \ref{switching}}}
    %\STATE $\bar x^k = \bar x^{k-1}$
    \STATE $\phi_{k+1} = \alpha$, $\theta_k = \theta_{k-1}$, $\lambda_{k} = \lambda_{k-1}$ 
    %\STATE $\theta_k = \theta_{k-1}$
    %\STATE $\lambda_{k} = \lambda_{k-1}$
    \STATE $\text{sum}_{k+1}^1 = 0$, $\text{sum}_{k+1}^2 = 0$, $\text{flg} = 0$
    %\STATE $\textbf{flg} = 1$
    \ELSE  
    \STATE $\phi_{k+1} = \alpha$ %\hfill {$\triangleright\,$ \textbf{case \ref{positive summation}: continue and update}} 
    \STATE $\text{sum}_{k+1}^2 = \text{sum}_k^2$ + (\eqref{ls:eq:13} with $\phi_{k+1}  = \alpha$) %\hspace*{8mm} {\textbf{ $\textnormal{sum}_{k+1}^2$ with small $\phi_{k+1}$, if $\textnormal{sum}_{k+1}^2$ is}}
    \STATE $\text{sum}_{k+1}^1 = 0$ %\hspace*{49mm} {\textbf{not negative with large $\phi_k$ and \textnormal{flg} $=0$}}
\ENDIF 
\ENDIF
\end{algorithmic}
\end{algorithm}
%%%%%%%%%%%%%%%%%%%%%%%%%%%%%%%%%%
\setlength{\textfloatsep}{5pt}
\textbf{Notation.} Let $\mathcal{V}$ be the finite-dimensional real vector space with the standard inner product $\langle \cdot, \cdot \rangle$ and $\ell_p$-norm  $\|\cdot\|_p$ (by $\|\cdot\|$, we mean the Euclidean standard 2-norm). We also denote the $\pi_{\mathcal{A}}$ for the metric projection onto set $\mathcal{A}$ ($\pi_{\mathcal{A}}(x) = \arg\min_{y \in \mathcal{A}} \|x - y\|$), $\delta_{\mathcal{A}}$ the indicator function of set $\mathcal{A}$, $\text{dist}(x,\mathcal{A})$ the distance from $x$ to set $\mathcal{A}$ ($\text{dist}(x,\mathcal{A}) = \| \pi_{\mathcal{A}}(x) - x\|$), and $\mathbb{B}(\Tilde{x},r)$ a closed ball with center $\Tilde{x}$ and radius $r>0$. The operator $F$ is $L$-Lipschitz, if there is $L>0$ such that for all $x,y \in \mathcal{V}$ we have $\|F(x) - F(y)\| \leq L\|x-y\|$.  
%following inequality hold.
\begin{comment}
\begin{equation} \label{eq3}
\|F(x) - F(y)\| \leq L\|x-y\|
\end{equation}
\end{comment}
Furthermore, $F$ is locally Lipschitz, if it is Lipschitz over any compact set of its domain. The operator $F$ is monotone if $\langle F(x) - F(y), x - y \rangle \geq 0$ for all $x,y \in \mathcal{V}$
\begin{comment}
\begin{equation} \label{eq4}
\langle F(x) - F(y), x - y \rangle \geq 0
\end{equation}
\end{comment}
and it is called strongly monotone with constant $\mu > 0$ if 
$\langle F(x) - F(y), x - y \rangle \geq \mu \|x-y\|^2$ for all $x,y \in \mathcal{V}$.
\begin{comment}
the following inequality holds $x,y \in \mathcal{V}$
\begin{equation} \label{eq5}
\langle F(x) - F(y), x - y \rangle \geq \mu \|x-y\|^2
\end{equation}
\end{comment}
%We say that $F$ satisfies the Minty variational inequality problem if there exists $\hat{x} \in \mathcal{A}$ such that the following inequality holds for all $x \in \mathcal{V}$
%\begin{align}\label{Minty}
%    \langle F(x), x - \hat{x} \rangle + g(x) - g(\hat{x}) \geq 0 \tag{Minty VI}
%\end{align}
%Generally (if $F$ is continuous), the solution set of the Minty VI ($\mathcal{S_{\text{Minty}}^{\text{VI}}}$) is a subset of the solution set of the main VI problem \eqref{VI-main}. 
The prox operator of a function $g \colon \mathcal{V} \to \mathbb{R}$ is defined as $\text{prox}_{g}(x) = \arg \min_{u} \{g(u) + \|u-x\|^2/2\}$. A function is ``prox-friendly'' if the prox operator is available (computationally or explicitly). The following equations are useful and commonly used in the proofs \cite{facchinei2003finite}:
\begin{subequations}
\begin{align} 
    y = \text{prox}_g x &\Longleftrightarrow \langle y - x, z - y \rangle \geq \nonumber\\
    & \qquad\qquad\qquad\quad g(y) - g(z), \quad \forall z \in \mathcal{V} \label{eq7}\\
    \|a x + (1-&a)y\|^2 = a\|x\|^2 + (1-a)\|y\|^2 \nonumber\\
    - a &(1-a)\|x-y\|^2. \quad \forall x,y \in \mathcal{V}, \, \forall a \in \mathbb{R} \label{eq8}
\end{align}
\end{subequations}
\begin{comment}
\begin{lemma}[Sequence convergence {\cite[Lemma~1]{malitsky2020golden}}]\label{lemma1}
      If ${x^k} \in \mathcal{V}$ is a bounded sequence, and $\lim\limits_{k\rightarrow \infty} (x^k - x)$ exists, where $x$ is a cluster point of the sequence ${x^k}$, then $x^k$ is convergent.
\end{lemma}
\end{comment}
%%%%%%%%%%%%%%%%%%%%%%%%%%%%%%%%%%%%%%%%%%%%%%%%%%%%%%%%%%%%%
\section{Preliminaries and First Algorithm}\label{first adaptive VI}
In this section, we first present the main theorem, which helps establish the boundedness and convergence of the iterations with a variable momentum parameter for the algorithms whose general forms are given in Algorithms \ref{alg1} and \ref{alg2}. We then describe our first algorithm, which follows the PGD and aGRAAL frameworks, differing only in the choice of the momentum parameter determined by conditions ensuring a sufficient decrease in the error bound. \\
Before proceeding with the theorem, let us define the merit function $\Psi(x,y) := \langle F(x), y - x \rangle + g(y) - g(x)$, which is convex with respect to $y$. It can be easily seen that \eqref{VI-main} is equivalent to finding $x^* \in \mathcal{V}$ such that $\Psi(x^*,x) \geq 0, \, \forall x\in \mathcal{V}$.
\begin{theorem}[Variable momentum in aGRAAL]\label{main-aGRAAL}
Let $F\colon \dom g \to \mathcal{V}$ be locally Lipschitz and monotone operator. Then $\left(x^k\right)_{k \in \mathbb{N}}$ and $\left(\bar x^k\right)_{k \in \mathbb{N}}$, generated by Algorithms~\ref{alg1}-\ref{alg2}, satisfy the following inequality:
\begin{align}\label{ls:eq:11}
    &\frac{\phi_{k+1}}{\phi_{k+1} -1 } \n{\x^{k+1}-x}^2 +
    \frac{\theta_k}{2}\n{x^{k+1}-x^k}^2 + 2\la_k \Psi(x, x^k) \nonumber\\ 
    &\leq  \frac{\phi_{k+1}}{\phi_{k+1} -1 }
    \n{\x^{k}-x}^2 + \frac{\theta_{k-1}}{2}\n{x^{k}-x^{k-1}}^2 \nonumber\\    
   &-\dfrac{\lambda_k}{\lambda_{k-1}}\phi_k \n{x^k-\x^k}^2 +\bigl(\dfrac{\lambda_k}{\lambda_{k-1}}\phi_k - 1 - \frac{1}{\phi_{k+1}}\bigr) \n{x^{k+1}-\x^{k}}^2 \nonumber\\
   & - \bigl(\dfrac{\lambda_k}{\lambda_{k-1}}\phi_k - \theta_k\bigr)\n{x^{k+1}-x^k}^2.
\end{align}
\end{theorem}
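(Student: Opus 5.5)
This is the aGRAAL energy estimate of Malitsky, redone with a variable momentum $\phi_k$. Throughout, the averaging step is written as $\x^{k} = \frac{(\phi_k-1)x^k+\x^{k-1}}{\phi_k}$; Algorithm~\ref{alg1} is the special case $\phi_k\in\{\phi,1\}$ in this notation, and $\phi_k=1$ gives $\x^k=x^k$. The plan has four steps.

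\textbf{Step 1: prox inequality at iteration $k$.} Apply \eqref{eq7} to $x^{k+1}=\text{prox}_{\lambda_k g}(\x^k-\lambda_k F(x^k))$ with a test point $x$:
\[
\lr{x^{k+1}-\x^k+\la_k F(x^k),\, x-x^{k+1}} \ge \la_k\bigl(g(x^{k+1})-g(x)\bigr).
\]

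\textbf{Step 2: prox inequality at iteration $k-1$.} Apply \eqref{eq7} to $x^k=\text{prox}_{\la_{k-1}g}(\x^{k-1}-\la_{k-1}F(x^{k-1}))$ with test point $z=x^{k+1}$. Multiply the result by $\la_k/\la_{k-1}$ and substitute $\x^{k-1}-x^k=\phi_k(\x^k-x^k)$, which follows from the averaging rule. This gives
\[
\frac{\la_k}{\la_{k-1}}\phi_k\lr{x^k-\x^k,\, x^{k+1}-x^k} \ge \la_k\bigl(g(x^k)-g(x^{k+1})\bigr) - \la_k\lr{F(x^{k-1}),\, x^{k+1}-x^k}.
\]

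\textbf{Step 3: sum and form the merit function.} Add the two inequalities. The $g(x^{k+1})$ terms cancel. Split $\lr{F(x^k),x-x^{k+1}} = \lr{F(x^k),x-x^k} + \lr{F(x^k),x^k-x^{k+1}}$. By monotonicity of $F$,
\[
\lr{F(x^k),x^k-x} + g(x^k) - g(x) \ge \Psi(x,x^k),
\]
which produces the term $2\la_k\Psi(x,x^k)$ after multiplying the whole inequality by $2$. What remains is
\begin{align*}
&2\lr{x^{k+1}-\x^k,\, x-x^{k+1}} + 2\frac{\la_k}{\la_{k-1}}\phi_k\lr{x^k-\x^k,\, x^{k+1}-x^k}\\
&\quad + 2\la_k\lr{F(x^k)-F(x^{k-1}),\, x^k-x^{k+1}} \ge 2\la_k\Psi(x,x^k).
\end{align*}

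\textbf{Step 4: convert to squared norms.} Expand both inner products with the polarization identity $2\lr{a,b}=\n{a+b}^2-\n{a}^2-\n{b}^2$:
\[
2\lr{x^{k+1}-\x^k,\, x-x^{k+1}} = \n{\x^k-x}^2-\n{x^{k+1}-x}^2-\n{x^{k+1}-\x^k}^2,
\]
\[
2\lr{x^k-\x^k,\, x^{k+1}-x^k} = \n{x^{k+1}-\x^k}^2-\n{x^k-\x^k}^2-\n{x^{k+1}-x^k}^2.
\]
These produce exactly the terms $-\frac{\la_k}{\la_{k-1}}\phi_k\n{x^k-\x^k}^2$, $+\frac{\la_k}{\la_{k-1}}\phi_k\n{x^{k+1}-\x^k}^2$ and $-\frac{\la_k}{\la_{k-1}}\phi_k\n{x^{k+1}-x^k}^2$ appearing in \eqref{ls:eq:11}.

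For the operator-difference term, use Cauchy--Schwarz, Young's inequality, and the stepsize rule, which gives $\la_k\n{F(x^k)-F(x^{k-1})}\le \sqrt{\phi\theta_{k-1}/4}\,\n{x^k-x^{k-1}}\cdot\sqrt{\la_k/\la_{k-1}}$ (with $\phi$ replaced by $\alpha$ in Algorithm~\ref{alg2}). Together with $\theta_k=\phi\la_k/\la_{k-1}$, this bounds
\[
2\la_k\lr{F(x^k)-F(x^{k-1}),\, x^k-x^{k+1}} \le \frac{\theta_k}{2}\n{x^{k+1}-x^k}^2 + \frac{\theta_{k-1}}{2}\n{x^k-x^{k-1}}^2.
\]
Moving $\theta_k\n{x^{k+1}-x^k}^2$ to the left then leaves $-(\frac{\la_k}{\la_{k-1}}\phi_k-\theta_k)\n{x^{k+1}-x^k}^2$ on the right, as in \eqref{ls:eq:11}.

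\textbf{Main obstacle: replacing $\n{x^{k+1}-x}^2$ by $\n{\x^{k+1}-x}^2$.} The left side must contain $\frac{\phi_{k+1}}{\phi_{k+1}-1}\n{\x^{k+1}-x}^2$, and the right side contains $\frac{\phi_{k+1}}{\phi_{k+1}-1}\n{\x^k-x}^2$ rather than $\n{\x^k-x}^2$. To get there, write
\[
x^{k+1}=\frac{\phi_{k+1}}{\phi_{k+1}-1}\x^{k+1}-\frac{1}{\phi_{k+1}-1}\x^k
\]
and apply \eqref{eq8} with $a=\frac{\phi_{k+1}}{\phi_{k+1}-1}$; note $a>1$, which \eqref{eq8} allows since it holds for all real $a$. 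This gives
\[
\n{x^{k+1}-x}^2 = \frac{\phi_{k+1}}{\phi_{k+1}-1}\n{\x^{k+1}-x}^2 - \frac{1}{\phi_{k+1}-1}\n{\x^k-x}^2 + \frac{1}{\phi_{k+1}}\n{x^{k+1}-\x^k}^2.
\]
Here I used $\n{\x^{k+1}-\x^k}^2=\bigl(\frac{\phi_{k+1}-1}{\phi_{k+1}}\bigr)^2\n{x^{k+1}-\x^k}^2$, so that $a(1-a)\n{\x^{k+1}-\x^k}^2=-\frac{1}{\phi_{k+1}}\n{x^{k+1}-\x^k}^2$.

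Substituting this identity:
\begin{itemize}
\item the coefficient of $\n{\x^k-x}^2$ becomes $1+\frac{1}{\phi_{k+1}-1}=\frac{\phi_{k+1}}{\phi_{k+1}-1}$;
\item the extra $-\frac{1}{\phi_{k+1}}\n{x^{k+1}-\x^k}^2$ combines with the $-1$ from Step 4 to give the coefficient $\frac{\la_k}{\la_{k-1}}\phi_k-1-\frac{1}{\phi_{k+1}}$.
\end{itemize}
This is exactly \eqref{ls:eq:11}.

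The point to check carefully is that the Young constant in Step 4 is tied to $\phi$ (or $\alpha$) through $\theta_{k-1}$, not to the varying $\phi_k$. Consistency of the stepsize rule with $\theta_k$ must therefore be verified for both algorithms. The case $\phi_{k+1}\to\infty$ (i.e.\ $\x^{k+1}=x^{k+1}$) also needs care and is best handled as a limit or with the convention $\frac{\phi}{\phi-1}=1$ there.
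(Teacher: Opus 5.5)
Your proposal is correct and follows the paper's proof essentially step for step. It uses the same two prox inequalities \eqref{ls:eq:4}--\eqref{ls:eq:5}, with the second rescaled by $\lambda_k/\lambda_{k-1}$ via $x^k-\bar x^{k-1}=\phi_k(x^k-\bar x^k)$. It then applies monotonicity to produce $\Psi(x,x^k)$, polarization, the identity \eqref{ls:eq:identity} from \eqref{eq8}, and the Cauchy--Schwarz/stepsize/Young bound \eqref{ls:eq:10}.

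One slip in your setup: under your averaging rule, $\phi_k=1$ gives $\bar x^k=\bar x^{k-1}$, not $\bar x^k=x^k$. The momentum-free branch of Algorithm~\ref{alg1} is $\phi_k=\infty$, as you say yourself at the end. The delicate case is then $\phi_k=\infty$ on the right-hand side, not only $\phi_{k+1}$ on the left. There the $\phi_k$-weighted terms become $\infty\cdot 0$ and $\infty-\infty$, and only the grouped quantity $2\frac{\lambda_k}{\lambda_{k-1}}\langle x^k-\bar x^{k-1},x^{k+1}-x^k\rangle$ from your Step~2 is meaningful. The paper leaves this point implicit as well.
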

\begin{proof}
Let $x \in \mathcal{V}$ be arbitrary. Now consider Algorithms \ref{alg1} and \ref{alg2}, where ${x}^k$ and $\Bar{x}^k$ are updated as follows:
\begin{align*}
    \Bar{x}^k = \dfrac{(\phi_k - 1)x^k + \Bar{x}^{k-1}}{\phi_k}, \quad x^{k+1} = \text{prox}_{\lambda_k g}(\Bar{x}^k - \lambda_k F(x^k)).
\end{align*}
Then by using \eqref{eq7}, we have
\begin{align}
   \lr{x^{k+1}-\x^k+\la_k F(x^k), x -& x^{k+1}}  \geq \nonumber\\
   \la_k&\left(g(x^{k+1})-g(x)\right),  \label{ls:eq:4}\\
  \lr{x^{k}-\x^{k-1}+\la_{k-1} F(x^{k-1}), &x^{k+1} - x^{k}}  \geq \nonumber\\
  \la_{k-1}&\left( g(x^{k})-g(x^{k+1})\right).    \label{ls:eq:5}
\end{align}

Multiplying \eqref{ls:eq:5} by $\frac{\la_k}{\la_{k-1}} \geq 0$ and using that $x^k-\x^{k-1}={\phi_k}(x^k-\x^{k})$, we obtain
\begin{align}
    \label{ls:eq:6}
    \lr{\dfrac{\lambda_k}{\lambda_{k-1}}\phi_k(x^{k}-\x^{k})&+\la_k F(x^{k-1}), x^{k+1} - x^{k}} \nonumber\\
    & \geq \la_k \left( g(x^{k})-g(x^{k+1})\right).
\end{align}
The summation of~\eqref{ls:eq:4} and~\eqref{ls:eq:6} gives us
\begin{align}
    \label{ls:eq:7}
    &\lr{x^{k+1}-\x^k, x - x^{k+1}} + \dfrac{\lambda_k \phi_k }{\lambda_{k-1}}\lr{x^{k}-\x^{k}, x^{k+1} -x^{k}} \nonumber\\
    & + \la_k \lr{F(x^k)-F(x^{k-1}), x^k-x^{k+1}} \geq \nonumber\\
    &\la_k \lr{F(x^k), x^k-x} + \la_k \left(g(x^{k})-g(x)\right) \geq \nonumber \\ 
     &\la_k \Big[\lr{F(x), x^k-x} + g(x^{k})-g(x)\Big] = \la_k \Psi(x, x^k)    .
\end{align}
Expressing the first two terms in~\eqref{ls:eq:7} through norms leads to
\begin{align}
    \label{ls:eq:8}
  \n{x^{k+1}-x}^2 \leq \n{&\x^k-x}^2 - \n{x^{k+1}-\x^k}^2
     \nonumber \\
  &  + 2\la_k\lr{F(x^k)-F(x^{k-1}),x^k-x^{k+1}}\nonumber\\
  & \hspace{-25mm} +  \dfrac{\lambda_k}{\lambda_{k-1}}\phi_k \left(\n{x^{k+1} -\x^k}^2 -
  \n{x^{k+1}-x^k}^2-\n{x^k-\x^k}^2\right) \nonumber \\
  & - 2\la_k \Psi(x, x^k).
\end{align}  
Similarly to~\eqref{eq8}, we have
\begin{align}\label{ls:eq:identity}
    \n{x^{k+1}&-x}^2 = \frac{\phi_{k+1}}{\phi_{k+1}-1}\n{\x^{k+1}-x}^2 \nonumber\\
    &-\frac{1}{\phi_{k+1}-1}\n{\x^{k}-x}^2+\frac{1}{\phi_{k+1}}\n{x^{k+1}-\x^k}^2.
\end{align}
By combining this with~\eqref{ls:eq:8}, we obtain
\begin{align}
    \label{ls:eq:9}
    &\frac{\phi_{k+1}}{\phi_{k+1} -1 } \n{\x^{k+1}-x}^2  \leq  \frac{\phi_{k+1}}{\phi_{k+1} -1 }
    \n{\x^{k}-x}^2 + \nonumber\\
    &\bigl(\dfrac{\lambda_k}{\lambda_{k-1}}\phi_k - 1 -
    \frac{1}{\phi_{k+1}}\bigr) \n{x^{k+1}-\x^{k}}^2   - 2\la_k \Psi(x, x^k) \nonumber\\
    &-  \dfrac{\lambda_k}{\lambda_{k-1}}\phi_k \bigl(\n{x^{k+1}-x^k}^2 + \n{x^k-\x^k}^2\bigr) \nonumber \\
    &+ 2\la_k\lr{F(x^k)-F(x^{k-1}),x^k-x^{k+1}}.
\end{align}
Using the stepsize update rule and Holder's inequality, the last term on the right-hand side of~\eqref{ls:eq:9} upper bounded by
\begin{align}
    \label{ls:eq:10}
     2\la_k\lr{F(x^k)-F(x^{k-1}),x^k-x^{k+1}} &\leq \nonumber \\
     2\la_k\n{F(x^k)-F(x^{k-1})}\n{x^k-x^{k+1}} &\leq \nonumber \\
     \sqrt{\theta_k \theta_{k-1}}\n{x^k-x^{k-1}}\n{x^k-x^{k+1}} &\leq \nonumber \\
     \frac{\theta_k}{2} \n{x^{k+1}-x^k}^2 +  \frac{\theta_{k-1}}{2} \n{x^{k}-x^{k-1}&}^2.
\end{align}
Finally, by applying the obtained estimate to~\eqref{ls:eq:9}, we conclude the result of the theorem.
\begin{comment}
\begin{align}
    \label{ls:eq:11}
    &\frac{\phi_{k+1}}{\phi_{k+1} -1 } \n{\x^{k+1}-x}^2  +
    \frac{\theta_k}{2}\n{x^{k+1}-x^k}^2 +  2\la_k \Psi(x, x^k) \leq \nonumber\\ 
    &\frac{\phi_{k+1}}{\phi_{k+1} -1 } \n{\x^{k}-x}^2 +  \frac{\theta_{k-1}}{2}\n{x^{k}-x^{k-1}}^2 -\dfrac{\lambda_k}{\lambda_{k-1}}\phi_k \n{x^k-\x^k}^2 \nonumber\\
    &+\bigl(\dfrac{\lambda_k}{\lambda_{k-1}}\phi_k - 1 - \frac{1}{\phi_{k+1}}\bigr) \n{x^{k+1}-\x^{k}}^2 \nonumber\\
    &- \bigl(\dfrac{\lambda_k}{\lambda_{k-1}}\phi_k - \theta_k\bigr)\n{x^{k+1}-x^k}^2.
\end{align}
\end{comment}
\end{proof}
By controlling the right-hand-side of \eqref{ls:eq:11}, we can prove the boundedness and convergence of the sequence $\left(x^k\right)_{k \in \mathbb{N}}$.
Next, we aim to maintain the negativity of the last three terms of the right-hand-side of \eqref{ls:eq:11} while ensuring that $\phi_k$ attains a sufficiently large value which makes $\bar{x}^k$ closer to the current iterate ${x}^k$ instead of $\bar{x}^{k-1}$. Subsequently, we elaborate on two methods devised for achieving this objective.

%\begin{figure}[h]
%    \centering
%    \includegraphics[width=0.3\textwidth]{ECC2025/images/method1.pdf}
%    \caption{The procedure of the first method.}
%    \label{fig1}
%\end{figure}
%%%%%%%%%%%%%%%%%%%%%%%%%%%%%%%%%%%%%%%%%%%
%\begin{wrapfigure}{r}{0.53\linewidth}
%\captionsetup{justification=centering}
%     \centering
%           \includegraphics[width=0.33\textwidth]{ECC2025/images/VI_fig12.pdf}
%           \caption{The procedure of the first method.}
%           \label{fig1}
%\end{wrapfigure}
%%%%%%%%%%%%%%%%%%%%%%%%%%%%%%%%%%%%%%%%%%%
\textbf{Algorithm \ref{alg1}.} The idea of Algorithm \ref{alg1} is to alternate between the algorithm with a small $\phi \in \left(1, \frac{1+\sqrt{5}}{2}\right]$ and the one without momentum (or equivalently $\phi = \infty$) based on the residual evaluation, used as a measure of performance in VI \cite[Proposition~1.5.8]{facchinei2003finite}. The use of small $\phi$ ultimately leads to the convergence of the residual to zero due to the negativity of the three rightmost terms in \eqref{ls:eq:11} \cite[Theorem~2]{malitsky2020golden}. We initiate the algorithm without the momentum term, and by computing the residual, $J_k = \| x^k - \text{prox}_{g}({x}^k - F(x^k)) \| $, in each iteration, we continue without $\phi$ if the residual is decreasing. Conversely, if the residual is not decreasing, we switch $\phi$ to the small value until the residual becomes smaller than the minimum residual achieved so far plus $\frac{1}{\bar{k}}$, where $\bar{k}$ denotes the number of times switching has occurred so far.
%iteration index indicating the number of iterations when switching occurs.
%%%%%%%%%%%%%%%%%%%%%%%%%%%%%%%%%%%%%%%%%%%
\begin{figure}[!h]
\captionsetup{justification=centering}
     \centering
           \includegraphics[width=0.38\textwidth]{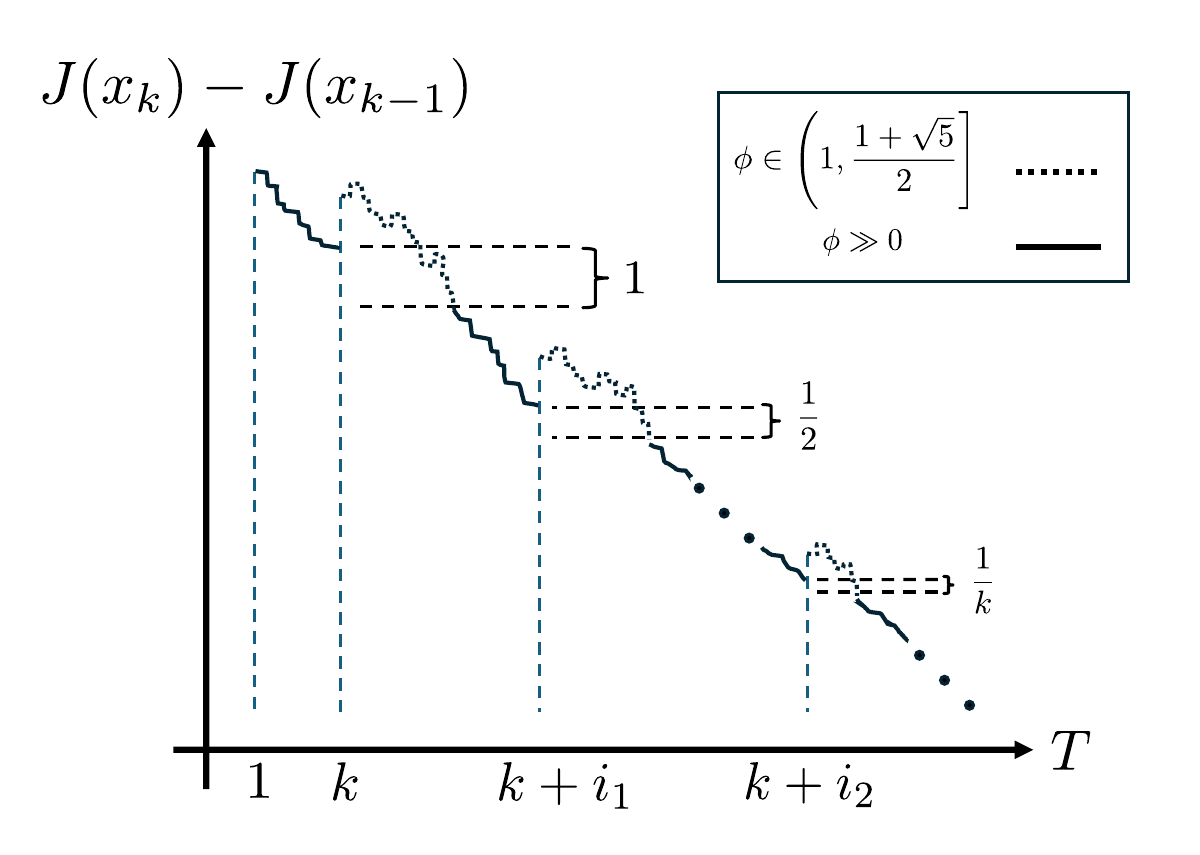}
           \caption{\footnotesize Residual variation induced by Algorithm \ref{alg1}.}
           \label{fig1}
\end{figure}
%%%%%%%%%%%%%%%%%%%%%%%%%%%%%%%%%%%%%%%%%%%
%\begin{figure}[ht]
%   \centering
%   \captionsetup{justification=centering}
%   \subfloat{\includegraphics[scale=0.70]{images/images_old/VIfig1.pdf}}
%   \caption{The procedure of second method.}
%       \hspace{1em}
%    \label{fig1}
%\end{figure}
%%%%%%%%%%%%%%%%%%%%%%%%%%%%%%%%%%%%%%%%%%%
It is noteworthy that the use of a small $\phi$ may result in non-monotone changes in the residual, and we refrain from altering $\phi$ until the residual becomes smaller than the minimum residual achieved so far plus the non-summable term. Figure \ref{fig1} illustrates how Algorithm \ref{alg1} operates: The alteration of $\phi$ is observed when the residual decreases sufficiently, ensuring convergence due to the fact that $\sum_k 1/\bar{k} \rightarrow \infty$.% as the number of iterations increase.
\section{An efficient switching algorithm}\label{second adaptive VI}
In this section, we analyze the convergence of Algorithm \ref{alg2} for solving \eqref{VI-main}, which follows the aGRAAL method. Differently from aGRAAL, the momentum parameter is not fixed, and in fact, in our numerical experience, it has a large value in many iterations, which supports the acceleration of the algorithms. Now, by employing \eqref{ls:eq:11}, we use a simple analysis to control the right-hand side of \eqref{ls:eq:11} and aim to maintain the negativity of the right-hand side while ensuring that $\phi_k$ attains a sufficiently large value.

\textbf{Algorithm \ref{alg2}.}
The algorithm is initiated with a large value for $\phi_k$, and the summation of~\eqref{ls:eq:12} is computed after each iteration (with large value of $\phi_{k+1}$). If the resulting summation is negative, the algorithm proceeds with the initial large value of $\phi_k$. Conversely, if the summation is not negative, the algorithm is reset (by restarting, $x^{k+1}$ that is generated by large $\phi_k$ and other parameters with indices $k$ are not considered as a new iteration and variables, lines 11-13 of Algorithm~\ref{alg2}), and $\phi_k$ is chosen from the interval $\left(1, \frac{1+\sqrt{5}}{2}\right]$.
\begin{align}\label{ls:eq:12}
%\color{red} 
\frac{\theta_{k-1}}{2}&\n{x^{k}-x^{k-1}}^2 -\dfrac{\lambda_k}{\lambda_{k-1}}\phi_k \n{x^k-\x^k}^2 \nonumber \\
+ \bigl(\dfrac{\lambda_k}{\lambda_{k-1}}&\phi_k - 1 - \frac{1}{\phi_{k+1}}\bigr) \n{x^{k+1}-\x^{k}}^2  \nonumber\\
 %\color{red}  
- \bigl(\dfrac{\lambda_k}{\lambda_{k-1}}&\phi_k - \theta_k\bigr)\n{x^{k+1}-x^k}^2  - \frac{\theta_k}{2}\n{x^{k+1}-x^k}^2.
\end{align}
After restarting, the following equation is examined in each iteration (with a large value of $\phi_{k+1}$)
\begin{align}\label{ls:eq:13}
\color{black} -\dfrac{\lambda_k \phi_k}{\lambda_{k-1}} \n{x^k-\x^k}^2 &+\bigl(\dfrac{\lambda_k\phi_k}{\lambda_{k-1}} - 1 - \frac{1}{\phi_{k+1}}\bigr) \n{x^{k+1}-\x^{k}}^2 \nonumber\\
&- \bigl(\dfrac{\lambda_k\phi_k}{\lambda_{k-1}} - \theta_k\bigr)\n{x^{k+1}-x^k}^2.
\end{align}
If the computed summation is negative, then the algorithm employs the large $\phi$ once more for the next iterations; conversely, if the summation is not negative, the algorithm persists with a small value of $\phi$. In this context, three scenarios are contemplated for Algorithm \ref{alg2}
\begin{enumerate}[label=(\roman*), itemsep = 0mm, topsep = -0mm, leftmargin = 7mm]
    \item\label{negative summation} $\textbf{Always negative summation:}$ By telescoping \eqref{ls:eq:11} the summation of \eqref{ls:eq:12} is always negative; therefore, $x^k$ are bounded and $x^k \rightarrow x^*$ if $k \rightarrow \infty$.
    \item\label{positive summation} $\textbf{Always positive summation:}$ %In this scenario the summation in \eqref{ls:eq:13} is positive and according to the Algorithm \ref{alg2} 
    We always have $\phi \in (1, \frac{1+\sqrt{5}}{2}]$, thus we obtain the same algorithm as in \cite[Algorithm~1]{malitsky2020golden}.
    \item\label{switching} $\textbf{Switching between small and large}$ $\phi$ $\textbf{:}$ If $\phi_k$ is small and by modifying $\phi_{k+1}$ to a larger value, \eqref{ls:eq:13} becomes negative, we adjust $\phi_{k+1}$ to a larger value in the subsequent step. Then, the inequality $\frac{\phi_{k+1}}{\phi_{k+1} - 1} \leq \frac{\phi_{k}}{\phi_{k} - 1}$ holds, and \eqref{ls:eq:11} in two steps is as follows:
\begin{align}
    \label{ls:eq:14}
    &(\frac{\phi_{k}}{\phi_{k} -1 } - \frac{\phi_{k+1}}{\phi_{k+1} -1 }) \n{\x^{k}-x}^2 + \frac{\phi_{k+1}}{\phi_{k+1} -1 } \n{\x^{k}-x}^2 \nonumber\\
    & + \frac{\theta_{k-1}}{2}\n{x^{k}-x^{k-1}}^2 \color{black} + 2\la_{k-1} \Psi(x, x^{k-1})  \nonumber \\
    &\leq \frac{\phi_{k}}{\phi_{k} -1 } \n{\x^{k-1}-x}^2 +  \frac{\theta_{k-2}}{2}\n{x^{k-1}-x^{k-2}}^2\nonumber\\
    &  \color{black} +\bigl(\dfrac{\lambda_{k-1}}{\lambda_{k-2}}\phi_{k-1} - 1 - \frac{1}{\phi_{k}}\bigr) \n{x^{k}-\x^{k-1}}^2 \nonumber\\
    & -\dfrac{\lambda_{k-1}}{\lambda_{k-2}}\phi_{k-1} \n{x^{k-1}-\x^{k-1}}^2 \nonumber\\
   & - \bigl(\dfrac{\lambda_{k-1}}{\lambda_{k-2}}\phi_{k-1} - \theta_{k-1}\bigr)\n{x^{k}-x^{k-1}}^2.
\end{align}
\begin{align}
    \label{ls:eq:15}
    &\frac{\phi_{k+1}}{\phi_{k+1} -1 } \n{\x^{k+1}-x}^2  +
    \frac{\theta_k}{2}\n{x^{k+1}-x^k}^2 +  2\la_k \Psi(x, x^k) \nonumber \\
    & \leq \frac{\phi_{k+1}}{\phi_{k+1} -1 } \n{\x^{k}-x}^2 + \frac{\theta_{k-1}}{2}\n{x^{k}-x^{k-1}}^2 \nonumber \\
    &-\dfrac{\lambda_k}{\lambda_{k-1}}\phi_k \n{x^k-\x^k}^2 - \bigl(\dfrac{\lambda_k}{\lambda_{k-1}}\phi_k - \theta_k\bigr)\n{x^{k+1}-x^k}^2   \nonumber\\
     & +\bigl(\dfrac{\lambda_k}{\lambda_{k-1}}\phi_k - 1 - \frac{1}{\phi_{k+1}}\bigr) \n{x^{k+1}-\x^{k}}^2,
\end{align}
where in the first line of \eqref{ls:eq:14} we add and subtract $\frac{\phi_{k+1}}{\phi_{k+1} -1 } \n{\x^{k}-x}^2$. However, if $\phi_k$ is large and the summations of \eqref{ls:eq:12} is not negative, the algorithm should be reset with a smaller $\phi_k$. \\
Let us assume we switch to the large $\phi$ in the ${k}^{\text{th}}$ iteration and after $i+1$ steps, we change $\phi$ to a small value. In this case, the condition "$\text{sum}_{k+1}^1 \leq 0$" in Algorithm \ref{alg2} (negative summation of \eqref{ls:eq:12}) ensures that $\n{\x^{k}-x}^2 \geq \n{\x^{k+i}-x}^2$ while $\frac{\phi_{k}}{\phi_{k} -1 } - \frac{\phi_{k+1}}{\phi_{k+1} -1 } = -(\frac{\phi_{k+i}}{\phi_{k+i} -1 } - \frac{\phi_{k+i+1}}{\phi_{k+i+1} -1 })$. Therefore, \eqref{ls:eq:11} in two steps can be expressed as follows:
\begin{align}    
    &(\frac{\phi_{k+i}}{\phi_{k+i} -1 } - \frac{\phi_{k+i+1}}{\phi_{k+i+1} -1 }) \n{\x^{k+i}-x}^2 \nonumber \\
    & + \frac{\phi_{k+i+1}}{\phi_{k+i+1} - 1} \n{\x^{k+i}-x}^2 + \frac{\theta_{k+i-1}}{2}\n{x^{k}-x^{k+i-1}}^2 \nonumber \\ 
    & + 2\la_{k+i-1} \Psi(x, x^{k+i-1}) \leq  \frac{\phi_{k+i}}{\phi_{k+i} - 1} \n{\x^{k+i-1}-x}^2 \nonumber\\
    & + \frac{\theta_{k+i-2}}{2}\n{x^{k+i-1}-x^{k+i-2}}^2 \nonumber \\
    & -\dfrac{\lambda_{k+i-1}}{\lambda_{k+i-2}}\phi_{k+i-1} \n{x^{k+i-1}-\x^{k+i-1}}^2 \nonumber\\
    & + \bigl(\dfrac{\lambda_{k+i-1}}{\lambda_{k+i-2}}\phi_{k+i-1} - 1 - \frac{1}{\phi_{k+i}}\bigr) \n{x^{k+i}-\x^{k+i-1}}^2 \nonumber \\
    & - \bigl(\dfrac{\lambda_{k+i-1}}{\lambda_{k+i-2}}\phi_{k+i-1} - \theta_{k+i-1}\bigr)\n{x^{k+i}-x^{k+i-1}}^2. \label{ls:eq:16} \\
    &\frac{\phi_{k+i+1}}{\phi_{k+i+1} -1 } \n{\x^{k+i+1}-x}^2  +
    \frac{\theta_{k+i}}{2}\n{x^{k+i+1}-x^{k+i}}^2 \nonumber \\
    &+ 2\la_{k+i} \Psi(x, x^{k+i}) \leq \frac{\phi_{k+i+1}}{\phi_{k+i+1} -1 } \n{\x^{k+i}-x}^2 \nonumber \\
    &+ \frac{\theta_{k+i-1}}{2}\n{x^{k+i}-x^{k+i-1}}^2 \nonumber \\
    & -\dfrac{\lambda_{k+i}}{\lambda_{k+i-1}}\phi_{k+i} \n{x^{k+i}-\x^{k+i}}^2 \nonumber \\
    &+\bigl(\dfrac{\lambda_{k+i}}{\lambda_{k+i-1}}\phi_{k+i} - 1 - \frac{1}{\phi_{k+i+1}}\bigr) \n{x^{k+i+1}-\x^{k+i}}^2 \nonumber\\
    &- \bigl(\dfrac{\lambda_{k+i}}{\lambda_{k+i-1}}\phi_{k+i} - \theta_{k+i}\bigr)\n{x^{k+i+1}-x^{k+i}}^2, \label{ls:eq:17}
\end{align}
where in the first line of \eqref{ls:eq:16} we add and subtract $\frac{\phi_{k+i+1}}{\phi_{k+i+1} -1 } \n{\x^{k+i}-x}^2$. Then by telescoping \eqref{ls:eq:11} (in both cases, whether switching from a small $\phi$ to a large one \eqref{ls:eq:14} and \eqref{ls:eq:15}, or switching from a large $\phi$ to a small one \eqref{ls:eq:16} and \eqref{ls:eq:17}), we drive \eqref{conv ineq}. More precisely, the conditions in line 7 of Algorithm \ref{alg2} ensure that, by telescoping \eqref{ls:eq:11}, we obtain similar coefficient terms on the right and left-hand side of successive lines of \eqref{ls:eq:11} (e.g., the leftmost terms in \eqref{ls:eq:14} and \eqref{ls:eq:16} can be removed by telescoping the inequalities, and we have similar terms on the right and left-hand sides of two successive inequalities)
%, like $\frac{\phi_{k+1}}{\phi_{k+1} -1 } \n{\x^{k}-x}^2  + \frac{\theta_{k-1}}{2}\n{x^{k}-x^{k-1}}^2$ on the left-hand side of \eqref{ls:eq:14} and the right-hand side of \eqref{ls:eq:15}),
which allows us to \textit{point-wise} remove the similar terms and obtain inequality \eqref{conv ineq} after $T$ iterations, as follows:
%the same inequality as in \cite{malitsky2020golden} (equation (35) in Theorem 2), which provides us with convergence to the solution of \eqref{VI-main}. 
%In particular, by telescoping \eqref{ls:eq:11} for $T$ iterations, we obtain:
\begin{align}\label{conv ineq}
     &\frac{\phi_{T}}{\phi_{T} -1 } \n{\x^{T}-x}^2 +
    \frac{\theta_{T-1}}{2}\n{x^{T}-x^{T-1}}^2 + 2 \sum_{i=1}^T \la_i \Psi(x, x^i)  \nonumber\\ 
    & \quad \leq \frac{\phi_2}{\phi_2 -1 }\n{\x^{1}-x}^2 + \frac{\theta_{0}}{2}\n{x^{1}-x^{0}}^2 + D,
\end{align}
where $D$ is a non-positive constant which is summation of the three negative rightmost terms in~\eqref{ls:eq:11}. Note that $T$ in~\eqref{conv ineq} is not exactly the number of projections or operator evaluations in Algorithm~\ref{alg2}. In more detail, if we are in case~\ref{negative summation} and always continue with large $\phi$, then the number of projections and operator evaluations is exactly \( T \). In the worst-case scenario, we have case~\ref{positive summation}, where the current sequence should regenerate with small $\phi$. In this situation, the number of projections and operator evaluations is $2T$. Finally, if the sequence is generated by switching between large and small $\phi$ (case~\ref{switching}), then the number of projections and operator evaluations is between $T$ and $2T$. It is also worth noting that, in practice, the number of projections and operator evaluations is close to $T$ (see Section~\ref{simulation}).
\end{enumerate}
Similarly to \cite{malitsky2020golden}, we can prove the ergodic convergence rate based on \eqref{conv ineq}. The following theorem indicate the convergence properties of Algorithm \ref{alg2}. 
%To keep it short, we skip the proof and refer interested readers to~\cite{malitsky2020golden} for further details.

\begin{theorem}[Ergodic convergence]\label{Ergodic convergence}
Let $X_k$ be the ergodic sequence $X_k = {\sum_{i = 1}^k \lambda_i x^{i}}/{\sum_{i = 1}^k \lambda_i}$ and $e_r(y) = \max_{x \in \mathcal{U}} \, \Psi(x,y) \quad \forall y \in \mathcal{V}$, where $\mathcal{U} = \dom \, g\cap \mathbb{B}(\hat{x},r)$ and $\hat{x} \in \dom g$. Then, we obtain the $\mathcal{O}(k^{-1})$ convergence rate for the ergodic sequence $X_k$. More precisely we have
\begin{align*}
    e_r(X_k) = \max\limits_{x \in \mathcal{U}} \, \Psi(x, X_k) &\leq \dfrac{M}{k},
    % \dfrac{\max\limits_{x \in \mathcal{U}} \, \left(\sum_{i = 1}^k \Psi(x, x^{i}) \right)}{\sum_{i = 1}^k \lambda_i} \leq \dfrac{M}{\sum_{i = 1}^k \lambda_i}.
\end{align*}
where $M > 0$ is some constant dominates the right-hand side of \eqref{conv ineq} for all $x \in \mathcal{U}$, in particular $\sum_{i=1} \la_i \Psi(x, x^i) \leq M$.
\end{theorem}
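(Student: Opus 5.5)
The plan follows the standard ergodic argument of the golden-ratio framework, starting from the telescoped inequality \eqref{conv ineq}. There are three ingredients: a bound on $\sum_{i} \la_i\Psi(x,x^i)$, Jensen's inequality for the convex map $y\mapsto \Psi(x,y)$, and a linear lower bound on $\sum_i \la_i$.

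\textbf{Step 1: bounding the weighted sum.} Fix $x\in\mathcal{U}$. In \eqref{conv ineq}, drop the nonnegative terms $\frac{\phi_T}{\phi_T-1}\n{\x^T-x}^2$ and $\frac{\theta_{T-1}}{2}\n{x^T-x^{T-1}}^2$ from the left, and use $D\le 0$. This gives
\begin{equation*}
2\sum_{i=1}^k \la_i\Psi(x,x^i)\le \frac{\phi_2}{\phi_2-1}\n{\x^1-x}^2+\frac{\theta_0}{2}\n{x^1-x^0}^2 .
\end{equation*}
Since $\mathcal{U}\subset\mathbb{B}(\hat x,r)$ is bounded, $\n{\x^1-x}\le \n{\x^1-\hat x}+r$. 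Because $\phi_2\in\{\alpha,\bar\phi\}$, the coefficient $\frac{\phi_2}{\phi_2-1}$ is bounded. The right-hand side is therefore bounded uniformly in $x\in\mathcal{U}$ and in $k$, and half of this bound serves as $M$, so that $\sum_{i=1}^k\la_i\Psi(x,x^i)\le M$.

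\textbf{Step 2: passing to the ergodic iterate.} The map $y\mapsto\Psi(x,y)=\lr{F(x),y-x}+g(y)-g(x)$ is convex, since it is affine plus convex. Here $X_k$ is a convex combination of the $x^i$ with weights $\la_i/\sum_j\la_j$, so Jensen's inequality gives
\begin{equation*}
\Psi(x,X_k)\le \frac{\sum_{i=1}^k\la_i\Psi(x,x^i)}{\sum_{i=1}^k\la_i}\le \frac{M}{\sum_{i=1}^k\la_i}.
\end{equation*}
Taking the maximum over $x\in\mathcal{U}$ yields $e_r(X_k)\le M/\sum_{i=1}^k\la_i$. Note that $X_k\in\dom g$, because each $x^i$ is a prox output.

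\textbf{Step 3: lower bound on the stepsizes (the main obstacle).} It remains to show $\la_i\ge \underline\la>0$ for all $i$. Then $\sum_{i=1}^k\la_i\ge k\underline\la$, and after rescaling $M$ to $M/\underline\la$ we get the $\mathcal{O}(k^{-1})$ rate.

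First, \eqref{conv ineq} applied with $x=x^*$ (where $\Psi(x^*,x^i)\ge0$) shows that $(\x^k)$ and $(x^k)$ stay in a bounded set $\mathcal{K}$. Since $F$ is locally Lipschitz, it has a Lipschitz constant $L$ on $\mathcal{K}$.

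Next, I follow the induction of \cite[Lemma~2]{malitsky2020golden}. Suppose the middle term in the stepsize rule is active. Using $\theta_{k-1}=\alpha\la_{k-1}/\la_{k-2}$, it reads $\la_k\la_{k-1}\ge \frac{\alpha^2\la_{k-1}}{4L^2\la_{k-2}}$, that is,
\begin{equation*}
\la_k\la_{k-2}\ge \frac{\alpha^2}{4L^2}.
\end{equation*}
Combined with $\la_k\le\rho\la_{k-1}$ and the cap $\bar\la$, this keeps the stepsizes from decaying. Concretely, one shows by induction that $\la_k\ge \min\{\la_0,\la_1,\frac{\alpha^2}{4L^2\bar\la}\}$, up to a constant factor. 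The case where $\rho\la_{k-1}$ is active is trivial because $\rho>1$.

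The delicate point is the restarts in Algorithm~\ref{alg2}. There, $\la_k$ is reset to $\la_{k-1}$ and $\theta_k$ to $\theta_{k-1}$, which I must check does not break the induction. Since restarts only copy previous values, they preserve any lower bound already established, so I expect the argument to go through unchanged.
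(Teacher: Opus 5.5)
Your proposal is correct and follows the paper's route: you bound $\sum_i \la_i\Psi(x,x^i)$ uniformly over $\mathcal{U}$ from \eqref{conv ineq}, use convexity of $\Psi(x,\cdot)$ to pass to $X_k$, and use a positive lower bound on $\la_k$ as in \cite[Lemma~2]{malitsky2020golden} (which you reprove, restart check included); the paper additionally proves convergence of the iterates, which the rate does not need. Only cosmetic fixes are needed: $\rho=\frac{1}{\alpha}+\frac{1}{\alpha^2}\ge 1$, with equality at $\alpha=\frac{1+\sqrt5}{2}$, which still suffices for the $\rho$-case; $\bar\la$ should be replaced by $\max\{\la_0,\bar\la\}$ in your induction bound to cover $k=2$ and a restart at $k=1$; and $x^1$ is an initial choice, not a prox output.
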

\begin{proof}
    See Appendix.
\end{proof}
Algorithm \ref{alg2} consists of three parts. Line 8 handles either the case of \ref{negative summation} or modifies $\phi_{k+1}$ to a large value (case \ref{switching}). In lines 11-13, the algorithm adjusts $\phi_{k+1}$ to a small value (cases \ref{positive summation} or \ref{switching}). Note that in lines 11–13, the generated \( x^{k+1} \) is removed, and we go one step back to reset the setup. We then use \( x^k \), \( x^{k-1} \), and \( \bar{x}^{k-1} \) with an updated setup to generate a new \( x^{k+1} \). Finally, lines 15-17 correspond to case \ref{positive summation}, where we continue by updating $\text{sum}_{k+1}^2$ with a small $\phi_{k+1}$ if $\text{sum}_{k+1}^2$ is non-negative with a large $\phi_k$ and $\text{flg}=0$.
%%%%%%%%%%%%%%%%%%%%%%%%%%%%%%%%%%%%%%%%%%%%%%%%%%%%%%%%%%%%%%
\section{Numerical simulations} \label{simulation}
We demonstrate the performance of Algorithm~\ref{alg1} and \ref{alg2} on six classes of VI problems studied in the literature: \ref{nash} Nash–Cournot equilibrium, \ref{sparse-l-r} sparse logistic regression, \ref{2-zero-sum} Two-player Zero Sum Game, \ref{mdp pr} Markov decision processes, \ref{s-monotone-vi} strongly affine monotone operator, and \ref{nonmonotone vi} VI problem with non-monotone operator. To evaluate the performance of our proposed algorithms, we compare their residual, used as a measure of performance in VI, with the residuals of the following methods from the literature throughout the iterations: (i) Projected Gradient descent (PrGD), (ii) projected reflected Gradient descent (PrRefGD), and (iii) adaptive Golden ratio (aGRAAL), a relatively recent method for monotone variational inequality and the closest in spirit to our proposed methods. To be more fair, we plot the residual ($y$-axis) against the fixed number of \textit{operator evaluation} ($x$-axis) in all our figures. We set $\phi = 1.5$, and $\lambda_0 = \bar{\lambda} = 1$ in Algorithm \ref{alg1} and aGRAAL, and in Algorithm \ref{alg2}, $\bar{\phi} = 1$, $\alpha = 1.5$, and $\lambda_0 = \bar{\lambda} = 1$. The stepsizes for PrGD and PrRefGD are selected in each problem based on the Lipschitz constant of the underlying operator or the largest values that prevent divergence. Note that projection operators in all examples are evaluated using the solver \texttt{OSQP solver}\footnote{https://github.com/osqp/osqp}.
\begin{enumerate}[label=(\arabic*), itemsep = 0mm, topsep = 0mm, leftmargin = 5mm]
\item \textbf{Nash–Cournot equilibrium problem \cite{facchinei2003finite}. \label{nash}}
A variational inequality that corresponds to the Nash–Cournot equilibrium is find $x^* = (x_1^*,\dots, x_n^*)\in \R^{n}_+$
\begin{equation*}%\label{nash-cournot}
\text{s.t.} \quad \lr{F(x^*), x - x^*}\geq 0,\quad
\forall x \in \R^{n}_+,
\end{equation*}
where $F(x^*) = (F_1(x^*),\dots, F_n(x^*))$ and $ F_i(x^*) = f'_i(x_i^*) - p\Bigl(\sum_{j=1}^n x_j^*\Bigr) - x_i^*
p'\Bigl(\sum_{j=1}^n x_j^*\Bigr)$.\\
% \begin{equation*}
%  F_i(x^*) = f'_i(x_i^*) - p\Bigl(\sum_{j=1}^n x_j^*\Bigr) - x_i^*
% p'\Bigl(\sum_{j=1}^n x_j^*\Bigr).
% \end{equation*}
We assume that the function $p$ and $f_i$ are written as $p(Q) = 5000^{1/\gamma}Q^{-1/\gamma}$ and $f_i(x_i) = c_i x_i + \frac{\beta_i}{\beta_i+1}L_i^{\frac{1}{\beta_i}} x_i^{\frac{\beta_i+1}{\beta_i}}$.
%\begin{equation*}
 %p(Q) = 5000^{1/\gamma}Q^{-1/\gamma} \qquad \text{and} \qquad f_i(x_i) = c_i x_i + \frac{\beta_i}{\beta_i+1}L_i^{\frac{1}{\beta_i}} x_i^{\frac{\beta_i+1}{\beta_i}}
%\end{equation*}
We set $n=1000$ and generate our data randomly. Furthermore, we consider two scenarios for each entry of $\beta$, $c$, and $L$, which are drawn independently from the uniform distributions as follows:
% \begin{itemize}
\begin{enumerate}[label=(\roman*)]
    \item $\gamma =1.1$, $\beta_i\sim \mathcal{U}(0.5,2)$,
    $c_i\sim \mathcal{U}(1,100)$, $L_i\sim \mathcal{U}(0.5,5)$;
    \item $\gamma =1.5$, $\beta_i\sim \mathcal{U}(0.3,4)$ and $c_i$, $L_i$ as above.
    % \end{itemize}
\end{enumerate}
These parameters control the level of smoothness of $f_i$ and $p$; therefore, they can affect the convergence speed. Figure~\ref{fig2} reports the results where all algorithms are initialized at the same point chosen randomly: Our proposed algorithms exhibits faster convergence speed and outperforms other algorithms.
\begin{figure}[t]
    \centering
    \captionsetup{justification=centering}
    \subfloat[\footnotesize Case (i).]{\label{fig:VI1}\includegraphics[scale=0.23]{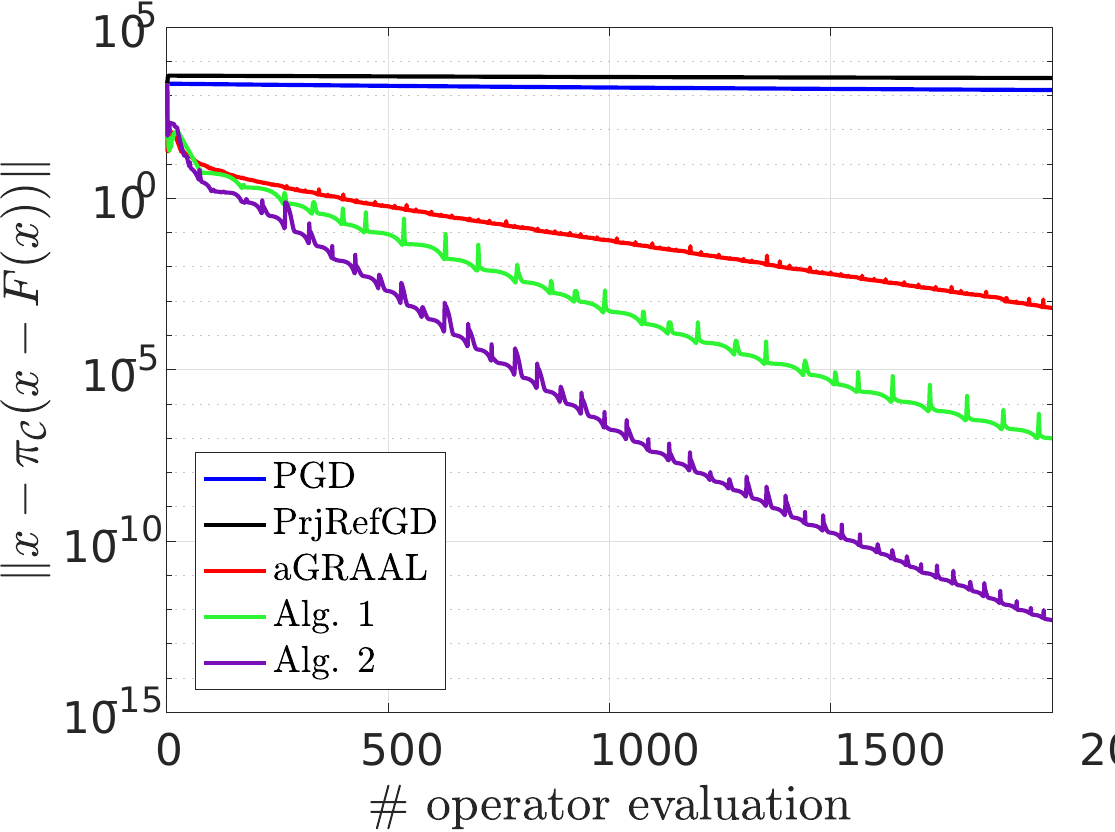}}
        %\hspace{.1em}
    \subfloat[\footnotesize Case (ii).]{\label{fig:VI2}\includegraphics[scale=0.23]{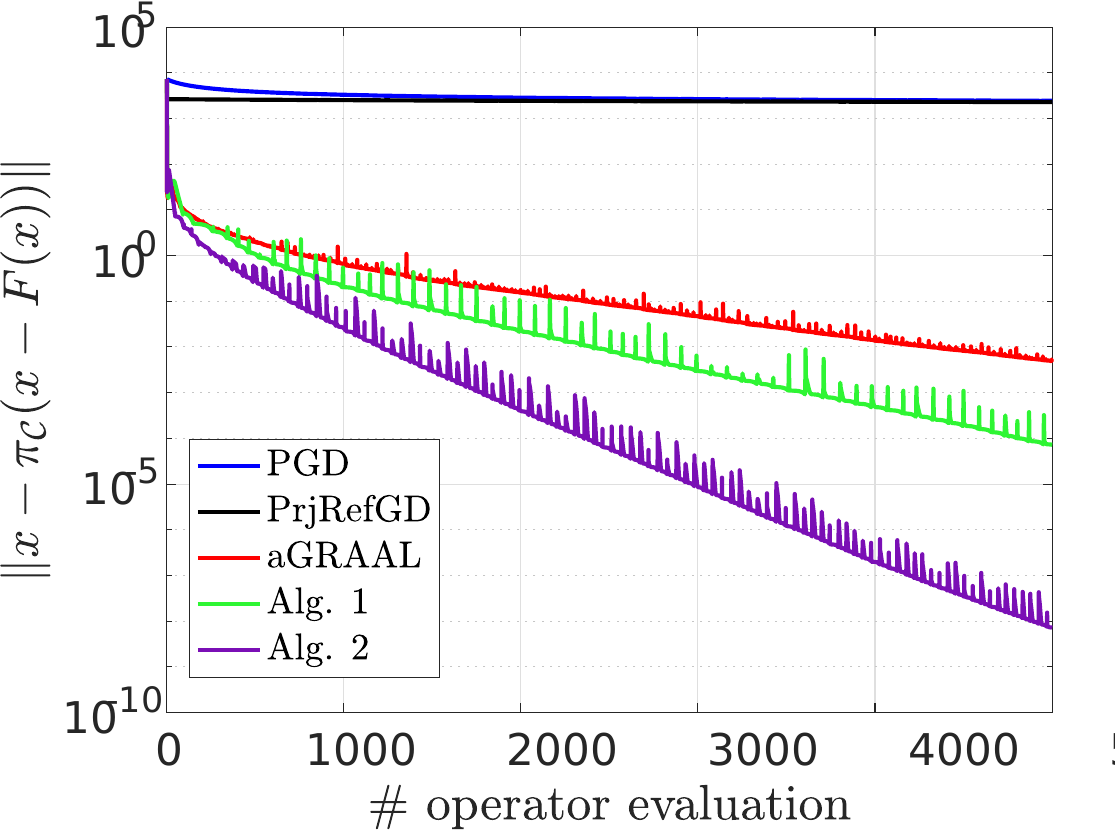}}
    \caption{\footnotesize Nash-Cournot equilibrium \ref{nash}.}
    \label{fig2}
\end{figure}
\item\textbf{Sparse logistic regression \cite{mishchenko2023regularized}. \label{sparse-l-r}} The sparse logistic regression can be written as follows
\begin{equation}
    \label{eq:slr}
     \min_x f(x):=\sum_{i=1}^m \log(1+\exp (-b_i\lr{a_i, x})) + \gamma \n{x}_1,
\end{equation}
where $x\in \R^n$, $a_i\in \R^n$, and $b_i\in \{-1, 1\}$, $\gamma>0$. This problem can be found in several machine learning applications, where one attempts to find a linear classifier for points $a_i$. The objective function in \eqref{eq:slr} is $f(x) = s(x)+g(x)$ with $g(x) = \gamma \n{x}_1$ and $s(x) =  h(Dx)$, where matrix $D\in \R^{m\times n}$ as $D_{ij} = -b_ia_{ij}$ and function $ h(y) = \sum_{i=1}^m\log(1+\exp (y_i))$.  It is easy to see that $s(x)$ is smooth with Lipschits constant gradient with $L_{\nabla s} = \frac 1 4 \n{D^{\top}D}$. In our experiments the test data $a_i$ and $b_i$ are generated randomly using the standard Gaussian distribution, $\gamma = 0.005\|A^\top b\|_{\infty}$, where $A = [a_1|a_2|\cdots|a_m]\in \mathbb{R}^{n\times m}$, $n = 500$, and $m = 200$. The results are presented in Figure \ref{fig:VI6}, where our methods demonstrates superior efficacy compared to other algorithms. We also plot the result of solving \eqref{eq:slr} using accelerated PrGD (FISTA) \cite{beck2009fast}.
\item\textbf{Two-player Zero Sum Game \cite{lemke1964equilibrium}. \label{2-zero-sum}} Generative adversarial networks (GANs) are a powerful class of neural networks that are used for unsupervised learning. The training of GANs can be considered a two-player zero sum game \cite{goodfellow2014generative}. For solving a two-player zero sum game, we need to solve the following bilinear saddle point problem,
\begin{equation}\label{eq:zero-sum}
	\min_{x \in \Delta^{m}} \, \max_{y \in \Delta^{n}} \; \Phi(x, y) := x^{\top} A y,
\end{equation}
where $ A \in \R^{m \times n} $ is a pay-off matrix and $ \Delta^{d} = \{ v \in \R^{d}_{+} \mid \sum_{i=1}^{d} v_{i} = 1 \} $ denotes the $ d $-dimensional simplex. The solution of~\eqref{eq:zero-sum} is given by a saddle point $ (x^{\ast}, y^{\ast}) \in \Delta^{m} \times \Delta^{n} $ satisfying $\Phi(x^{\ast}, y) \leq \Phi(x^{\ast}, y^{\ast}) \leq \Phi(x, y^{\ast})$
\begin{comment}
\begin{equation}
	\Phi(x^{\ast}, y) \leq \Phi(x^{\ast}, y^{\ast}) \leq \Phi(x, y^{\ast}),
	%\quad \forall (x,y) \in \Delta^{m} \times \Delta^{n}.
\end{equation}
\end{comment}
for all $(x,y) \in \Delta^{m} \times \Delta^{n}$, which can be written by problem~\eqref{VI-main} with $ \mathcal{A} = \Delta^{m} \times \Delta^{n} $ and
\begin{equation} \label{zerosumgame}
	F(x,y)
	=
	\begin{pmatrix}
		Ay\\
		\scalebox{0.75}[1.0]{$-$} A^{\top}x
	\end{pmatrix}
	=
	\begin{pmatrix}
		0 & A\\
		-A^{\top} & 0
	\end{pmatrix}
	\begin{pmatrix}
		x\\
		y
	\end{pmatrix}.
\end{equation}
For the experiments, we set $d = m = n = 50$, $A$ is generated with a uniform distribution on $\left[0, 1\right)$. A comparison of methods is reported in Figure~\ref{figGame2player}.
\vspace{-.2cm}
%%%%%%%%%%%%%%%%%%%%%%%%%%%%%
\begin{figure}[!h]
%\centering
    % \begin{minipage}{0.25\textwidth}
    % \centering
    % \captionsetup{justification=centering}
    % {\includegraphics[scale=0.21]{images/VIFeasibility.eps}}
    % \caption{\footnotesize Feasibility problem~\ref{feasibility}.}
    % \label{fig:VI5}
    % \end{minipage}
    \begin{minipage}{0.22\textwidth}
    \centering
     \captionsetup{justification=centering}
    {\includegraphics[scale=0.23]{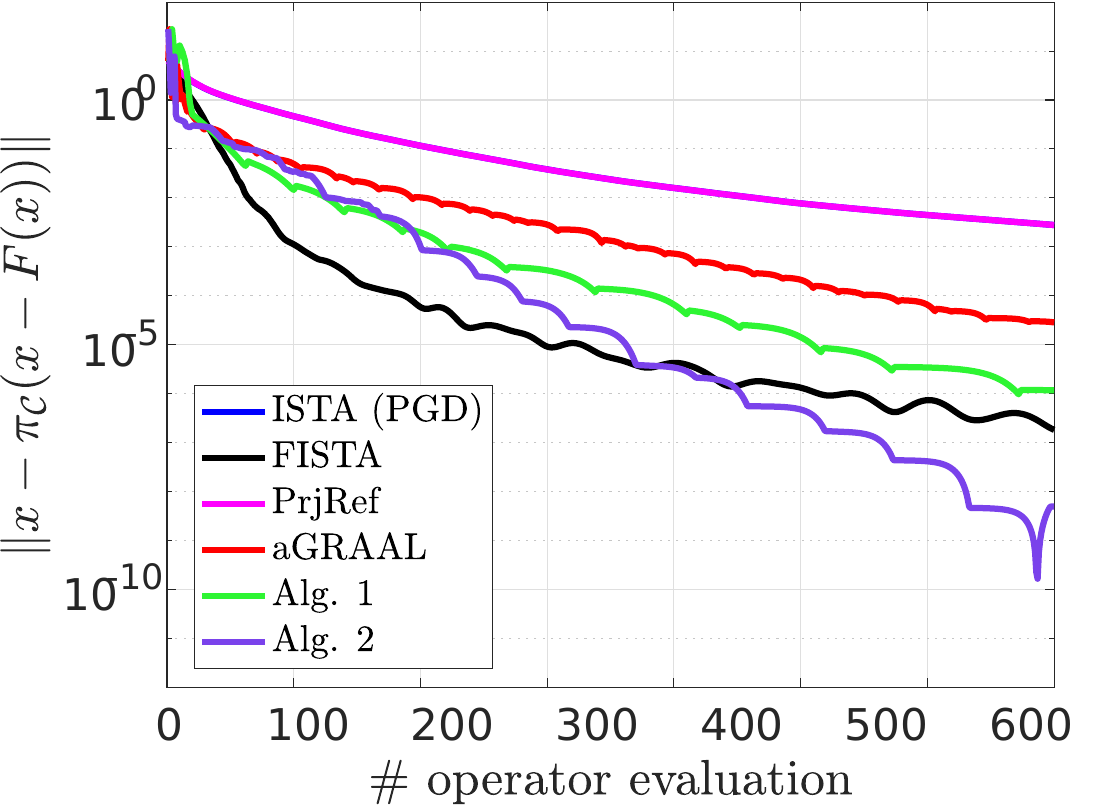}}
    \caption{\footnotesize Logistic regression~\ref{sparse-l-r}.}
    \label{fig:VI6}
    \end{minipage} 
    \hspace{1mm}
     \begin{minipage}{0.22\textwidth}
    \centering
     \captionsetup{justification=centering}
    {\includegraphics[scale=0.23]{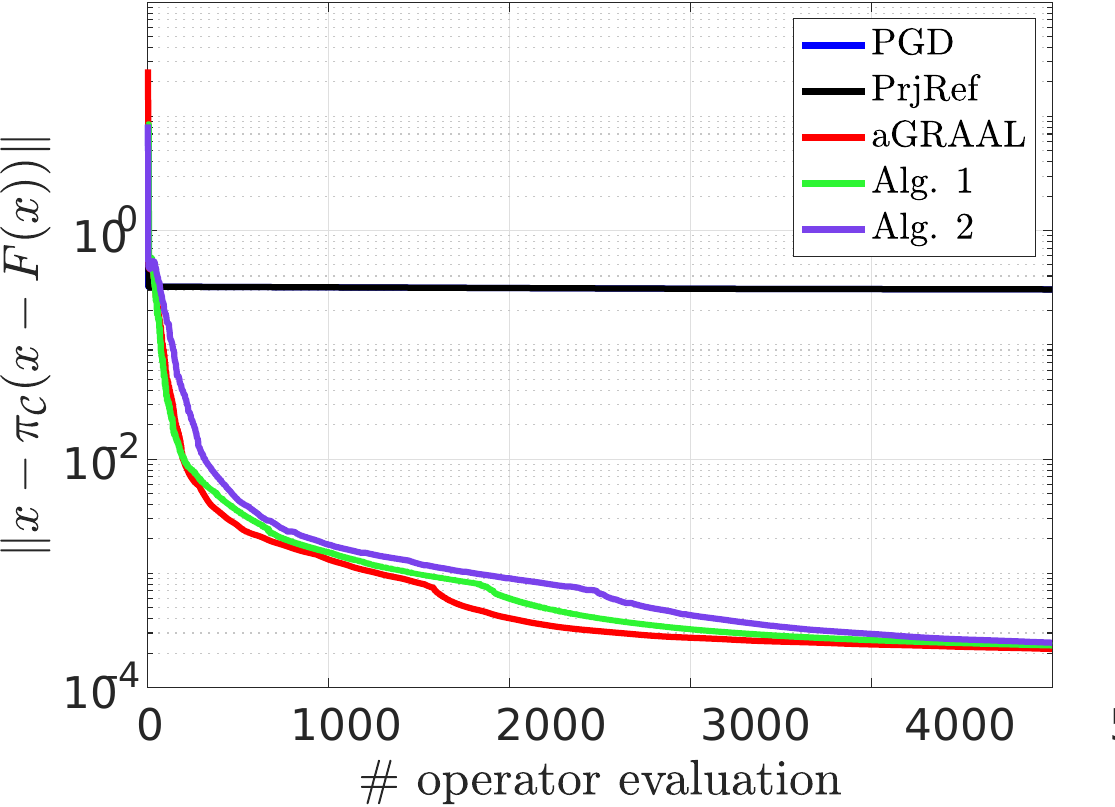}}
    \caption{\footnotesize Zero Sum Game \ref{2-zero-sum}.}
    \label{figGame2player}
    \end{minipage}
\end{figure}
% \begin{figure}[!h]
    %     \begin{minipage}{0.25\textwidth}
    %     \centering
    %     \captionsetup{justification=centering}
    %     {\includegraphics[scale=0.22]{images/VISkewsymmetric.eps}}
    %     \caption{\footnotesize Skew symmetric operator \ref{skew-sym}.}
    %     \label{fig:VI7}
    % \end{minipage}
%     \hspace{-.1cm}
%     \begin{minipage}{0.22\textwidth}
%     \centering
%      \captionsetup{justification=centering}
%     {\includegraphics[scale=0.22]{images/VItwoPlayer.eps}}
%     \caption{\footnotesize Zero Sum Game \ref{2-zero-sum}.}
%     \label{figGame2player}
%     \end{minipage}
% \end{figure}
%%%%%%%%%%%%%%%%%%%%%%%%%%%%
\vspace{-.2cm}
\item\textbf{Markov decision processes (MDPs) \cite{kolarijani2023optimization}. \label{mdp pr}} An MDP is a pair of $(\set{S},\set{A},\PP,c,\gamma)$, where $\set{S}$ and $\set{A}$ are the state space and action space, respectively. The transition kernel $\PP$ describes how the system moves between states: given a state $s$ and an action $a$, it shows the probability of transitioning to another state $s^+$.
The cost function $c: \set{S}\times\set{A} \rightarrow \mathbb{R}$, bounded from below, assigns a cost to each action-state pair. 
The discount factor $\gamma \in (0,1)$ can be seen as a trade-off parameter between short- and long-term costs. We take $\set{S} = \{1,2,\ldots,n \}$ and $\set{A} = \{1,2,\ldots,m \}$.\\
MDPs provide a robust modeling framework for stochastic environments, offering control mechanisms to minimize cost measures. By accessing to the transition kernel and the cost function, the problem is usually characterized by the fixed-point problem $ v^* = T(v^*)$, i.e., 
\begin{align}\label{eq:fp_v}
     v^*(s) = [T (v^*)](s), \quad \forall s \in \set{S},
\end{align}
where $T:\R^{|\set{S}|}\rightarrow\mathbb{R}^{|\set{S}|}$ is the \emph{Bellman operator} given by 
\begin{align*} %\label{eq:T}
    [T (v)](s) = \min_{a \in\set{A}} \left\{ c(s,a) + \gamma \mathds{E}_{s^+\sim\PP(\cdot|s,a)} \left[v(s^+)\right] \right\}. 
\end{align*}
The optimal value function~$v^*$ is the unique fixed-point of the Bellman operator~$T$. Therefore, we can reformulate this problem with \eqref{VI-main} by $F = \text{Id} - T$ and $g(x) = 0$.
%The uniqueness follows from the fact that the operator $T$ is a $\gamma$-contraction in $\infty$-norm. 
The comparison of the proposed algorithms in solving 50 instances of the optimal control problems of randomly generated Garnet MDPs with $n = 50$ states and $m = 5$ actions with two different values of discount factor $\gamma$ is reported in Figure \ref{figMDP}.
\vspace{-.5cm}
%%%%%%%%%%%%%%%%%%%%%%%%%%%%%%%%%%%%%%%%%%%%%
\begin{figure}[!h]
    \centering
    \captionsetup{justification=centering}
    \subfloat[$\gamma = 0.9$.]{\label{fig:VI9}\includegraphics[scale=0.23]{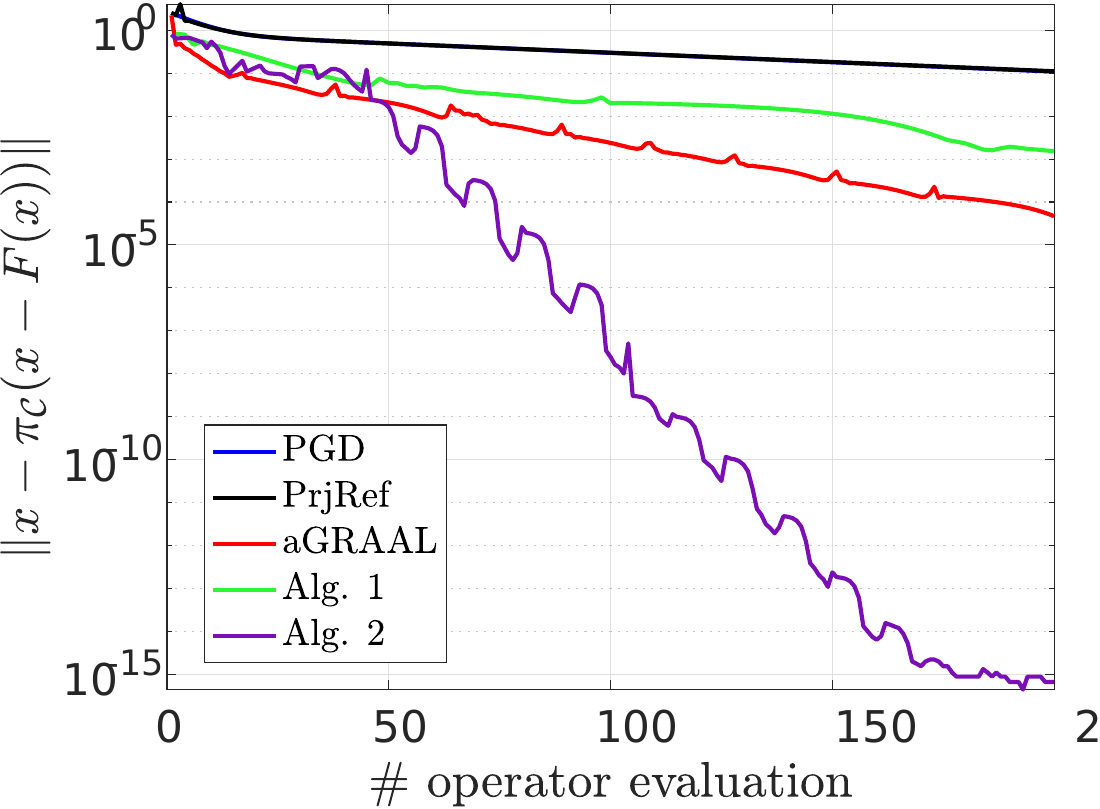}}
        \hfill
    \subfloat[$\gamma = 0.99$.]{\label{fig:VI10}\includegraphics[scale=0.23]{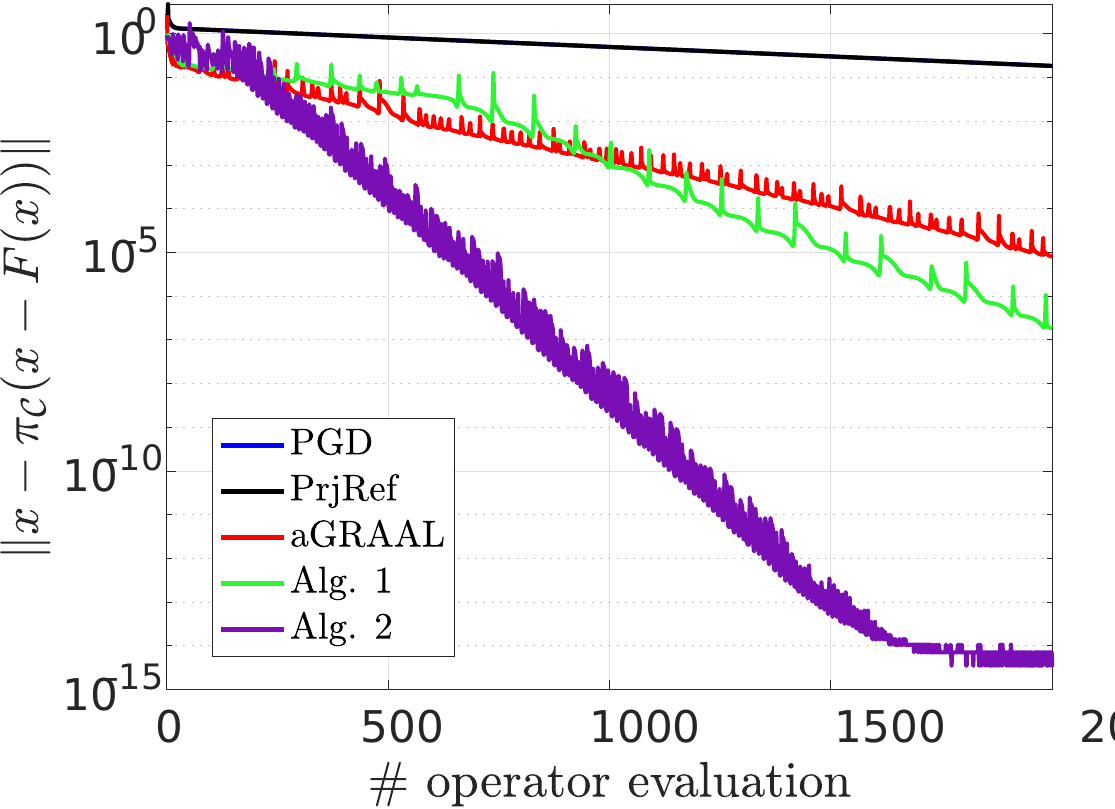}}
    %\hspace{.5em}
    %\subfloat[$\gamma = 0.999$.]{\label{fig:VI11}\includegraphics[scale=0.20]{images/VIMDP0.999.eps}}
    \caption{\footnotesize Performance in MDP for different values of $\gamma$ \ref{mdp pr}.}
    \label{figMDP}
\end{figure}
%%%%%%%%%%%%%%%%%%%%%%%%%%%%%%%%%%%%%%%%%%%%%
\vspace{-.25cm}
\item\textbf{Strongly monotone affine operator \cite{baghbadorani2025douglas}. \label{s-monotone-vi}} One popular VI problem with a strongly monotone operator is \eqref{VI-main} with affine operator $F(x) = Mx + q$, where $M$ generated randomly as $M = AA^\top + B + D,$ where each entry of the $n \times n$ matrix $A$ and the skew-symmetric matrix $B$ is uniformly sampled from the interval $(-5, 5)$, and every entry of diagonal matrix $D$ is uniformly sampled from the interval $(0, 0.3)$ (ensuring $M$ is positive definite), with each entry of $q$ uniformly sampled from $(-500, 0)$. The feasible set is $\mathcal{A} = \{ x \in \R^{n}_+ |\, x^1 + x^2 + \cdots + x^n = n\}$. For simulation experiments, we consider $n = 100$ and $L = |M|$ as the Lipschitz continuity of $F$. Figure~\ref{fig:VIstrongmonotone} illustrates the results with initial point $x^0 = (1,1,\ldots,1)$.
%\begin{comment}
\item\textbf{Non-monotone operator. \label{nonmonotone vi}} As a last example, we test our proposed algorithms on a non-monotone operator mentioned in \cite{malitsky2020golden}, where we aim to find a non-zero solution of $F(x) := M(x)x = 0$. Here, $M\colon \mathbb{R}^n\to \mathbb{R}^{n\times n}$ is a matrix-valued function, which can be considered as a VI \eqref{VI-main} with $g = 0$. For the experiment, we define $M$ as $M(x) := t_1t_1^{\top} + t_2t_2^{\top}, \, \text{with}\, t_1 = A\sin x, \, t_2 = B \exp(x),$
%\begin{equation*}
%    \label{eq:numF}
%     M(x) := t_1t_1^{\top} + t_2t_2^{\top}, \, \text{with}\, t_1 = A\sin
%    x, \, t_2 = B \exp(x),
%\end{equation*}
where $x\in \mathbb{R}^n$, $A$ and $B \in \mathbb{R}^{n\times n}$. For the experiment, we choose $n = 500$, and the matrices $A$ and $B$ are independently and randomly generated from the normal distribution $\mathcal{N}(0,1)$. The results of solving VI with the non-monotone operator $M$ are reported in Figure \ref{fig:VI8}, where the proposed algorithms outperform other methods.
%We refer interested readers to \cite{mignoni2025monviso} for additional algorithms for applications of monotone variational inequalities, as well as a ready-to-use Python toolbox.
%\end{comment}
\begin{figure}[!h]
    \begin{minipage}{0.22\textwidth}
        \centering
        \captionsetup{justification=centering}
        {\includegraphics[scale=0.23]{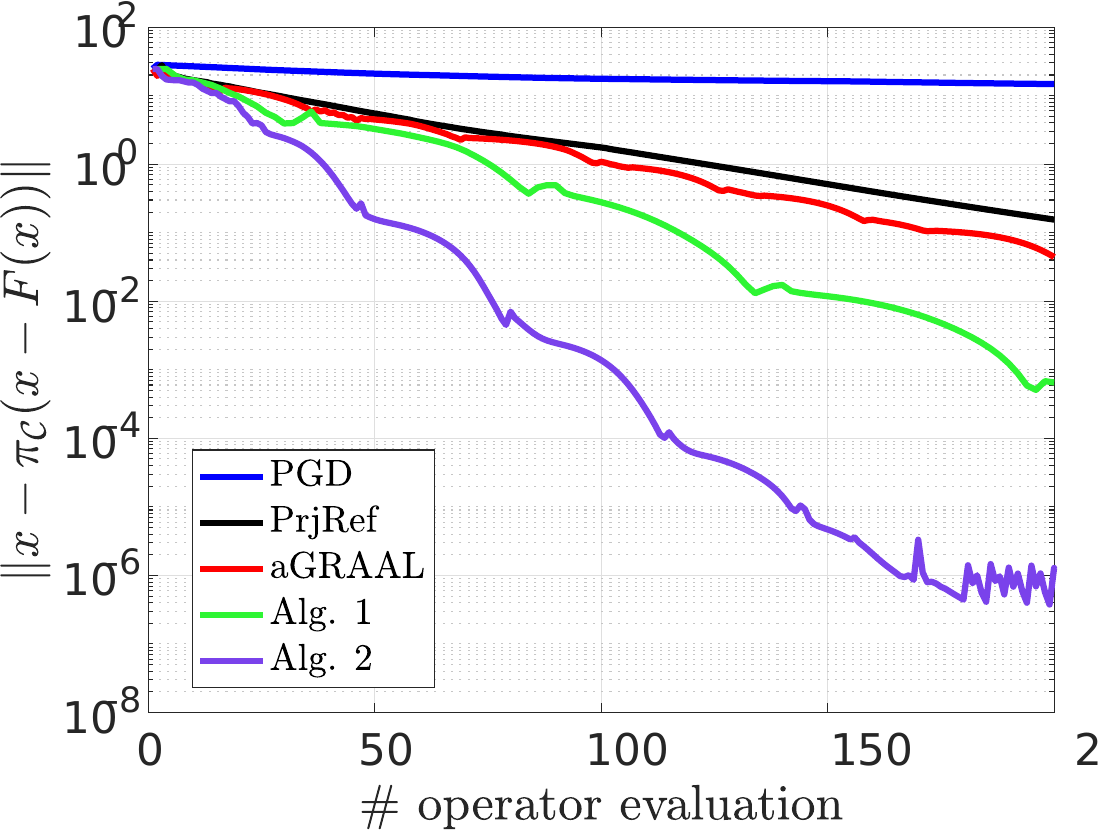}}
        \caption{\footnotesize Strongly monotone operator~\ref{s-monotone-vi}.}
        \label{fig:VIstrongmonotone}
    \end{minipage}
    \hspace{.1cm}
    \begin{minipage}{0.22\textwidth}
    \centering
    \captionsetup{justification=centering}
   {\includegraphics[scale=0.23]{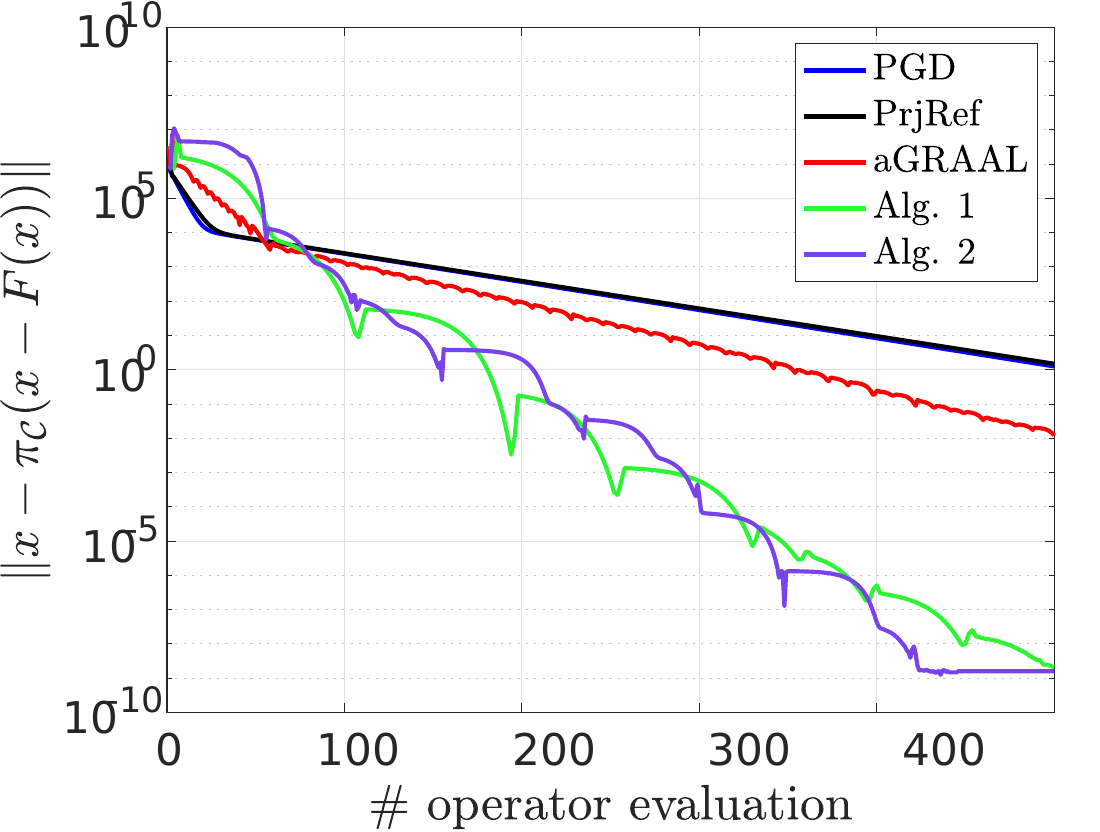}}
   \caption{\footnotesize Non-monotone operator~\ref{nonmonotone vi}.}
   \label{fig:VI8}
    \end{minipage}
\end{figure}
\end{enumerate}
%%%%%%%%%%%%%%%%%%%%%%%%%%%%%%%%%%%%%%%%
\section{Appendix}
In this section, we provide the proof of the Theorem \ref{Ergodic convergence} and additional supporting material.
\begin{definition}[Cluster point]
A point $x$ is called a cluster point of the sequence $\{x^k\}$ if there exists a subsequence $\{x^{k_j}\}$ such that $\lim_{j \to \infty} x^{k_j} = x$.
\end{definition}
\begin{lemma}[Bolzano–Weierstrass theorem]\label{lemma-cluster}
      If ${x^k} \in \mathcal{V}$ is a bounded sequence, and $\lim_{k\rightarrow \infty} (x^k - x)$ exists, where $x$ is a cluster point of the sequence ${x^k}$, then $x^k$ is convergent.
\end{lemma}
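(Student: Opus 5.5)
The plan is to read the hypothesis ``$\lim_{k\to\infty}(x^k - x)$ exists'' in the sense used for Fejér-type arguments, namely that the scalar sequence $\n{x^k - x}$ converges. This is the form in which the lemma will be applied, with $x$ a solution of \eqref{VI-main} and \eqref{conv ineq} giving monotonicity of the weighted distances. I will then show that the limit must be zero, which is exactly $x^k \to x$. If the hypothesis is instead read literally as convergence of the vectors $x^k - x$, the conclusion is immediate, since then $x^k = (x^k - x) + x$ converges. The substantive case is the norm version, which I treat below.

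\textbf{Step 1 (existence of a cluster point).} Boundedness of $(x^k)$ in the finite-dimensional space $\mathcal{V}$ guarantees, by the classical Bolzano--Weierstrass theorem, that a convergent subsequence exists. So the assumed cluster point $x$ is not vacuous. By the definition of cluster point above, I fix a subsequence $x^{k_j} \to x$.

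\textbf{Step 2 (identify the limit).} Let $\ell := \lim_{k\to\infty} \n{x^k - x}$, which exists by hypothesis. Since every subsequence of a convergent real sequence has the same limit, $\ell = \lim_{j\to\infty}\n{x^{k_j}-x}$. By Step 1 this subsequential limit is $0$, so $\ell = 0$. Hence $\n{x^k - x} \to 0$, i.e., $x^k \to x$, which proves the lemma.

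There is no serious obstacle. The only point needing care is the interpretation of the hypothesis: in applications one typically knows that $\n{x^k - x}$ converges for \emph{every} solution $x$, and that every cluster point is a solution. In that setting I would remark that the lemma applies to the specific cluster point produced in Step 1. This also yields uniqueness of the cluster point, since any two cluster points $x, x'$ would both have to equal the limit of $(x^k)$.
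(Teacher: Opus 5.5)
Your proof is correct. Under the norm reading, the subsequence $x^{k_j}\to x$ forces $\lim_{k}\|x^k-x\|=0$, and under the literal vector reading the conclusion is immediate; this is the standard argument (the paper states the lemma without proof, naming it only after Bolzano--Weierstrass). One small remark: once a cluster point $x$ is given, boundedness and Bolzano--Weierstrass are not actually used. They matter only in the usual formulation, where the limit is assumed to exist for \emph{every} cluster point, so one must first guarantee that a cluster point exists.
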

\begin{customproof}[Proof of Theorem \ref{Ergodic convergence}]
We use same proof technique as in \cite{malitsky2020golden}. If $ x = x^* \in \mathcal{S} $ in \eqref{conv ineq}, where $ \mathcal{S} $ is the solution set of \eqref{VI-main}, the sequences $ {x^k} $ and $ {\bar{x}^k} $ are bounded, with $ \theta_k |x^k - \bar{x}^k| \to 0 $. By \cite[Lemma 2]{malitsky2020golden}, $ \lambda_k $ and $ \theta_k $ remain bounded away from zero, implying $ x^k - \bar{x}^{k-1} \to 0 $ and $ x^{k+1} - x^k \to 0 $. For any cluster point of $ (x^k) $ and $ (\bar{x}^k) $, let $ (k_i) $ be a subsequence such that $ x^{k_i} \to \hat{x} $ and $ \lambda_{k_i} \to \lambda > 0 $. Then, $ x^{k_i+1} \to \hat{x} $ and $ \bar{x}^{k_i} \to \hat{x} $. Taking the limit of \eqref{ls:eq:4} for this subsequence gives: $\lambda \langle F(\hat{x}), x - \hat{x} \rangle \geq \lambda (g(\hat{x}) - g(x)), \quad \forall x \in \mathcal{V}$.
%\begin{align*}
%\lambda \langle F(\hat{x}), x - \hat{x} \rangle \geq \lambda (g(\hat{x}) - g(x)), \quad \forall x \in \mathcal{V}.
%\end{align*}
Thus, $ \hat{x} \in \mathcal{S} $. Finally, by Lemma~\ref{lemma-cluster}, the sequence $ (x^k) $ converges to some point in $\mathcal{S}$. Now, by defining a merit function $e_r(y) := \max_{x \in \mathcal{U}} \, \Psi(x,y)$ for all $y \in \mathcal{V}$, where $\mathcal{U} = \dom \, g \cap \mathbb{B}(\hat{x},r)$ and $\hat{x} \in \dom g$, as shown in \cite[Lemma~3]{malitsky2020golden}, we know that $e_r(y)$ is a positive convex function for all $y \in \mathcal{U}$. Furthermore, if $e_r(\Tilde{x}) = 0$ for some $\Tilde{x}$ where $\|\Tilde{x} - \hat{x}\| \leq r$, then $\Tilde{x}$ is a solution of \eqref{VI-main}. Since we assume that $F$ is a continuous operator and $g$ is lsc, there exists a constant $M > 0$ that bounds the right-hand-side of \eqref{conv ineq} for all $x \in \mathcal{U}$. Consequently, $\sum_{i=1}^k \lambda_i \Psi(x, x^i)$ can be bounded above by the constant $M$ for all $x \in \mathcal{U}$. Finally, let $X^k$ be the ergodic sequence defined in Theorem \ref{Ergodic convergence}.
 Using the convexity of $ \Psi(x, \cdot) $, we obtain $e_r(X^k) = \max_{x \in U} \Psi(x, X^k) \leq \frac{\max_{x \in U} \left( \sum_{i=1}^k \Psi(x, x^i) \right)}{\sum_{i=1}^k \lambda_i} 
\leq \frac{M}{\sum_{i=1}^k \lambda_i}.$
%\begin{align*}
%e_r(X^k) &= \max_{x \in U} \Psi(x, X^k) &\leq \frac{\max_{x \in U} \left( \sum_{i=1}^k \Psi(x, x^i) \right)}{\sum_{i=1}^k \lambda_i} \\
%&\leq \frac{M}{\sum_{i=1}^k \lambda_i}. 
%\end{align*}
This implies an ergodic convergence rate of $\mathcal{O}(k^{-1})$ as $\lambda_k$ is separated from zero.
\end{customproof}
%%%%%%%%%%%%%%%%%%%%%%%%%%%%%%%%%%%%%%%%%
\vspace{-2mm}
\bibliographystyle{IEEEtran} %{siam}
\bibliography{references}

\end{document}